\theoremstyle{theorem}
\newtheorem{theorem}{Theorem}[section]
\newtheorem{prop}[theorem]{Proposition}
\newtheorem{lemma}[theorem]{Lemma}
\newtheorem{coro}[theorem]{Corollary}
\newtheorem{prop-def}{Proposition-Definition}%[section]
\newtheorem{coro-def}{Corollary-Definition}[section]
\theoremstyle{definition}
\newtheorem{defn}[theorem]{Definition}%[section]
\newtheorem{remark}[theorem]{Remark}%[section]
\newcommand{\nc}{\newcommand}
\nc{\on}{\operatorname}
\nc{\tred}[1]{\textcolor{red}{#1}}
\nc{\tblue}[1]{\textcolor{blue}{#1}}
\nc{\tgreen}[1]{\textcolor{green}{#1}}
\nc{\tpurple}[1]{\textcolor{purple}{#1}}
\nc{\btred}[1]{\textcolor{red}{\bf #1}}
\nc{\btblue}[1]{\textcolor{blue}{\bf #1}}
\nc{\btgreen}[1]{\textcolor{green}{\bf #1}}
\nc{\btpurple}[1]{\textcolor{purple}{\bf #1}}
\newcommand{\efootnote}[1]{}
\renewcommand{\textbf}[1]{}
\newcommand{\delete}[1]{}
\nc{\dfootnote}[1]{{}}          %{{}}
\nc{\ffootnote}[1]{\dfootnote{#1}}
\nc{\mfootnote}[1]{\footnote{#1}} % Use this to show footnotes
\nc{\ofootnote}[1]{\footnote{\tiny Older version: #1}}
\nc{\mlabel}[1]{\label{#1}}  % Use this to suppress names
\nc{\mcite}[1]{\cite{#1}}  % Use this to suppress names
\nc{\mref}[1]{\ref{#1}}  % Use this to suppress names
\nc{\mcite}[1]{\cite{#1}{{\bf{{\ }(#1)}}}}  % Use this lines to show names
\nc{\mlabel}[1]{\label{#1}  % Use the next two lines to show names
{\hfill \hspace{1cm}{\bf{{\ }\hfill(#1)}}}}
\nc{\mcite}[1]{\cite{#1}{{\bf{{\ }(#1)}}}}  % Use this lines to show names
\nc{\mref}[1]{\ref{#1}{{\bf{{\ }(#1)}}}}  % Use this lines to show names
\nc{\mbibitem}[1]{\bibitem[\bf #1]{#1}} % Use this to show name
\nc{\mkeep}[1]{\marginpar{{\bf #1}}} % Use this to show marginpar
\nc{\repmap}{representative~map~}
\nc{\opa}{\ast} \nc{\opb}{\odot} \nc{\op}{\bullet} \nc{\pa}{\frakL}
\nc{\arr}{\rightarrow} \nc{\lu}[1]{(#1)}
\nc{\opc}{\sharp}\nc{\opd}{\natural}
\nc{\ope}{\circ}
\nc{\bin}[2]{ (_{\stackrel{\scs{#1}}{\scs{#2}}})}  %binomial coeff
\nc{\binc}[2]{ \left (\!\! \begin{array}{c} \scs{#1}\\
    \scs{#2} \end{array}\!\! \right )}  %binomial coeff
\nc{\bincc}[2]{  \left ( {\scs{#1} \atop
    \vspace{-1cm}\scs{#2}} \right )}  %binomial coeff
\nc{\bs}{\bar{S}} \nc{\cosum}{\sqsubset} \nc{\la}{\longrightarrow}
\nc{\rar}{\rightarrow} \nc{\dar}{\downarrow} \nc{\dprod}{**}
\nc{\dap}[1]{\downarrow \rlap{$\scriptstyle{#1}$}}
\nc{\md}{\mathrm{dth}} \nc{\uap}[1]{\uparrow
\rlap{$\scriptstyle{#1}$}} \nc{\defeq}{\stackrel{\rm def}{=}}
\nc{\disp}[1]{\displaystyle{#1}} \nc{\dotcup}{\
\displaystyle{\bigcup^\bullet}\ } \nc{\gzeta}{\bar{\zeta}}
\nc{\hcm}{\ \hat{,}\ } \nc{\hts}{\hat{\otimes}}
\nc{\barot}{{\otimes}} \nc{\free}[1]{\bar{#1}}
\nc{\uni}[1]{\tilde{#1}} \nc{\hcirc}{\hat{\circ}} \nc{\lleft}{[}
\nc{\lright}{]} \nc{\lc}{\lfloor} \nc{\rc}{\rfloor}
\nc{\curlyl}{\left \{ \begin{array}{c} {} \\ {} \end{array}
    \right .  \!\!\!\!\!\!\!}
\nc{\curlyr}{ \!\!\!\!\!\!\!
    \left . \begin{array}{c} {} \\ {} \end{array}
    \right \} }
\nc{\longmid}{\left | \begin{array}{c} {} \\ {} \end{array}
    \right . \!\!\!\!\!\!\!}
\nc{\onetree}{\bullet} \nc{\ora}[1]{\stackrel{#1}{\rar}}
\nc{\ola}[1]{\stackrel{#1}{\la}}%${\Bbb Z}$
\nc{\ot}{\otimes} \nc{\mot}{{{\boxtimes\,}}}
\nc{\otm}{\overline{\boxtimes}} \nc{\sprod}{\bullet}
\nc{\scs}[1]{\scriptstyle{#1}} \nc{\mrm}[1]{{\rm #1}}
\nc{\margin}[1]{\marginpar{\rm #1}}   %{\rm #1}}
\nc{\dirlim}{\displaystyle{\lim_{\longrightarrow}}\,}
\nc{\invlim}{\displaystyle{\lim_{\longleftarrow}}\,}
\nc{\mvp}{\vspace{0.3cm}} \nc{\tk}{^{(k)}} \nc{\tp}{^\prime}
\nc{\ttp}{^{\prime\prime}} \nc{\svp}{\vspace{2cm}}
\nc{\vp}{\vspace{8cm}} \nc{\proofbegin}{\noindent{\bf Proof: }}
\nc{\proofend}{$\blacksquare$ \vspace{0.3cm}}
\nc{\modg}[1]{\!<\!\!{#1}\!\!>}
\nc{\intg}[1]{F_C(#1)} \nc{\lmodg}{\!
<\!\!} \nc{\rmodg}{\!\!>\!}
\nc{\cpi}{\widehat{\Pi}}
\nc{\sha}{{\mbox{\cyr X}}}  %used to be \cyr
\nc{\shap}{{\mbox{\cyrs X}}} %sha as product
\nc{\shpr}{\diamond}    %Shuffle product
\nc{\shp}{\ast} \nc{\shplus}{\shpr^+}
\nc{\shprc}{\shpr_c}    %Cartier's product
\nc{\msh}{\ast} \nc{\zprod}{m_0} \nc{\oprod}{m_1}
\nc{\vep}{\varepsilon} \nc{\labs}{\mid\!} \nc{\rabs}{\!\mid}
\nc{\mmbox}[1]{\mbox{\ #1\ }}
\nc{\rsd}{\operatorname{RSD}}
\nc{\Ker}{\operatorname{Ker}}
\nc{\mult}{\operatorname{mult}}
\nc{\diff}{\mathfrak{Diff}}
\nc{\rank}{\operatorname{Rank}}
\nc{\stab}{\operatorname{Stab}}
\nc{\aut}{\operatorname{Aut}}
 \nc{\fp}{\operatorname{FP}}
\nc{\rchar}{\operatorname{char}}
\nc{\End}{\operatorname{End}}
\nc{\Fil}{\operatorname{Fil}}
\nc{\Mor}{\operatorname{Mor}\xspace}
 \nc{\gmzvs}{gMZV\xspace}
\nc{\gmzv}{gMZV\xspace}
 \nc{\mzv}{MZV\xspace}
\nc{\mzvs}{MZVs\xspace}
\nc{\Hom}{\operatorname{Hom}}
\nc{\id}{\operatorname{id}}
\nc{\im}{\operatorname{im}}
 \nc{\incl}{\operatorname{incl}}
 \nc{\map}{\operatorname{Map}}
\nc{\mchar}{\oparatorname{char}}
\nc{\nz}{\rm NZ}
 \nc{\supp}{\mathrm Supp}
 \nc{\Int}{\mathbf{Int}}
\nc{\Mon}{\mathbf{Mon}}
\nc{\Alg}{\mathbf{Alg}} \nc{\Bax}{\mathbf{Bax}} \nc{\bff}{\mathbf f}
\nc{\bfk}{{\bf k}} \nc{\bfone}{{\bf 1}} \nc{\bfx}{\mathbf x}
\nc{\bfy}{\mathbf y}
\nc{\base}[1]{\bfone^{\otimes ({#1}+1)}} %{{a_{#1}}}
\nc{\Cat}{\mathbf{Cat}}
\nc{\detail}{\marginpar{\bf More detail}
    \noindent{\bf Need more detail!}
    \svp}
\nc{\rbtm}{{shuffle }} \nc{\rbto}{{Rota-Baxter }}
\nc{\remarks}{\noindent{\bf Remarks: }} \nc{\Rings}{\mathbf{Rings}}
\nc{\Sets}{\mathbf{Sets}} \nc{\wtot}{\widetilde{\odot}}
\nc{\wast}{\widetilde{\ast}} \nc{\bodot}{\bar{\odot}}
\nc{\bast}{\bar{\ast}} \nc{\hodot}[1]{\odot^{#1}}
\nc{\hast}[1]{\ast^{#1}} \nc{\mal}{\mathcal{O}}
\nc{\tet}{\tilde{\ast}} \nc{\teot}{\tilde{\odot}}
\nc{\oex}{\overline{x}} \nc{\oey}{\overline{y}}
\nc{\oez}{\overline{z}} \nc{\oef}{\overline{f}}
\nc{\oea}{\overline{a}} \nc{\oeb}{\overline{b}}
\nc{\weast}[1]{\widetilde{\ast}^{#1}}
\nc{\weodot}[1]{\widetilde{\odot}^{#1}} \nc{\hstar}[1]{\star^{#1}}
\nc{\lae}{\langle} \nc{\rae}{\rangle}
\nc{\lf}{\lfloor}\nc{\rf}{\rfloor}
\nc{\cala}{{\mathcal A}} \nc{\calb}{{\mathcal B}}
\nc{\calc}{{\mathcal C}}
\nc{\cald}{{\mathcal D}} \nc{\cale}{{\mathcal E}}
\nc{\calf}{{\mathcal F}} \nc{\calg}{{\mathcal G}}
\nc{\calh}{{\mathcal H}} \nc{\cali}{{\mathcal I}}
\nc{\call}{{\mathcal L}} \nc{\calm}{{\mathcal M}}
\nc{\caln}{{\mathcal N}} \nc{\calo}{{\mathcal O}}
\nc{\calp}{{\mathcal P}} \nc{\calr}{{\mathcal R}}
\nc{\cals}{{\mathcal S}} \nc{\calt}{{\mathcal T}}
\nc{\calu}{{\mathcal U}} \nc{\calw}{{\mathcal W}} \nc{\calk}{{\mathcal K}}
\nc{\calx}{{\mathcal X}} \nc{\CA}{\mathcal{A}}
\nc{\fraka}{{\mathfrak a}} \nc{\frakA}{{\mathfrak A}}
\nc{\frakb}{{\mathfrak b}} \nc{\frakB}{{\mathfrak B}}
\nc{\frakD}{{\mathfrak D}} \nc{\frakg}{{\mathfrak g}}
\nc{\frakH}{{\mathfrak H}} \nc{\frakL}{{\mathfrak L}}
\nc{\frakM}{{\mathfrak M}} \nc{\bfrakM}{\overline{\frakM}}
\nc{\frakm}{{\mathfrak m}} \nc{\frakP}{{\mathfrak P}}
\nc{\frakN}{{\mathfrak N}} \nc{\frakp}{{\mathfrak p}}
\nc{\frakS}{{\mathfrak S}}
\font\cyr=wncyr10 \font\cyrs=wncyr7
\nc{\gl}[1]{\textcolor{red}{LG:#1}}
\nc{\ql}[1]{\textcolor{blue}{QL:#1}}
\nc{\red}[1]{\textcolor{red}{#1}}
\nc{\blue}[1]{\textcolor{blue}{#1}}
\begin{document}

\title{Representations of Rota-Baxter Algebras and Regular-Singular Decompositions}

\author{Zongzhu Lin}
\address{Department of Mathematics,
Kansas State University,
Mahattan, Kansas 66506}
\email{zlin@math.ksu.edu}
\author{Li Qiao}
\address{Department of Mathematics, Landzhou University, Lanzhou, China}\email{liqiaomail@126.com}

%
%=========================================================================
\begin{abstract} There is a Rota-Baxter algebra structure on the field  $A=\bfk((t))$
with $ P$ being the projection map
$A=\bfk[[t]]\oplus t^{-1}\bfk[t^{-1}]$ onto
$ \bfk[[ t]]$. We study the representation theory  and regular-singular decompositions of any finite dimensional $A$-vector space.
The main result shows that the category of finite dimensional representations
is semisimple and consists of exactly three isomorphism classes of
irreducible representations which are all one-dimensional. As a consequence,
the number of $GL_A(V)$-orbits in the set of all regular-singular decompositions
of an $n$-dimensional $ A$-vector space $V$ is $(n+2)(n+1)/2$. We also use the result to compute the generalized class number, i.e., the number of the $GL_n(A)$-isomorphism classes of finitely generated $\bfk[[t]]$-submodules of $A^n$. 
\end{abstract}

\maketitle

%\tableofcontents

\setcounter{section}{0}

%========================================================================
\section{Introduction}
\subsection{}

A Rota-Baxter algebra of weight $\lambda$ consists of an associative algebra $A$, equipped with a linear operator $P: A \longrightarrow A$ satisfying the Rota-Baxter relation:
\begin{equation}\label{eq:rbrel}
P(x)P(y) = P(P(x)y)+P(xP(y))+\lambda P(xy).
\end{equation}
$P$ is called a Rota-Baxter operator of weight $\lambda$.

The study of Rota-Baxter algebras originated in the work of Baxter \cite{Ba} on fluctuation theory, and the algebraic study was started by Rota \cite{Rot1}. The theory of Rota-Baxter algebras develops a general framework of the algebraic and combinatorial structures underlying integral calculus which is like differential algebras for differential calculus. Rota-Baxter algebra also finds its applications in combinatorics, mathematics physics, operads and number
theory \cite{Ag3,A-M,And,Bai,BBGN,BGP,Ca,C-K,C-K1,E,E-G1,
E-G4,EGGV,E-Gs,G-Z}. See~\mcite{Guw,Gub} for further details.

A simple but important
example of a Rota-Baxter algebra is the algebra of Laurent series
$$\bfk((t)) =\left\{\sum_{n=k}^{\infty}a_{n}t^{n}~\big|~a_{n} \in \bfk, -\infty < k < \infty \right\}
$$
where the Rota-Baxter operator is the projection
$$
P(\sum_{n=k}^{\infty}a_{n}t^{n}) = \sum_{n \geq 0} a_{n}t^{n}.
$$
Elements of $ \bfk((t))$ is the completion of the local ring of  rational functions around $0$ with regular singularity. The operator $ P$ maps a function to its regular part. Thus a function $f$ can be decomposed into a sum $f=(f-P(f))+P(f)$.  The Rota-Baxter relations suggests that $ P$ is an integral operator of the function spaces with values in the space of regular functions. Such a decomposition has found its application  in the renormalization of quantum field theory. See  \cite{C-K,EGK3,EGM,C-M} for more details.

The concept of representations of Rota-Baxter algebras was introduced in \cite{G-Lin}. Similar to representations of the differential algebras, which can be thought as a vector bundle with a connection,  a representation of a Rota-Baxter algebra  is a module of the algebra together with an integral operator $P$. A representation  can also be interpreted in terms of Feyman integrals of vector bundles with a compatible integral operator, which decomposes a section into regular part and singular part.

\subsection{} In this paper, we concentrate on the representations of the
Rota-Baxter algebra $A=\bfk((t))$ with the Rota-Baxter operator
$ P: A\rightarrow A$ being the projection of $A$ to $ \calo=\bfk[[t]]$.
Through the decomposition $A=\bfk[[t]]\oplus t^{-1}\bfk[t^{-1}]  $
as $\bfk$-vector spaces, we view $ A$ is field of  formal functions around the point
$0$ with regular singularity at $0$, then this decomposition is meant to decompose
each function into regular part (those in $\bfk[[t]]$) and singular part
(those in $ t^{-1}\bfk[t^{-1}])$. Such decomposition is also
reflected in the representations of the Rota-Baxter algebra.  In fact for each
$(A, P)$-module
$ (V, p)$ with $ V$ being an $A$-vector space, the pair of
$\bfk$-subspaces $ (p(V), \ker(p))$ defines a decomposition
$ V=p(V)\oplus \ker(p)$ of the $\bfk$-vector space. Here
$ p(V)$ is a $ \bfk[[t]]$-submodule of $V$ and $ \ker(p)$ is a
$\bfk[t^{-1}]$-submodule of $V$. We will call such decomposition
of $V$ as a {\em regular-singular decomposition}  (RSD for short) of $V$.
We will prove that on any $A$-vector space $V$, an $(A, P)$-module structure
is equivalent to giving an RSD of $V$.  We will study the isomorphism classes
of all RSDs of $V$ under the group $GL_A(V)$-action and use this to classify
all $(A, P)$-modules which are finite dimensional over $A$.

The field $A$ is a complete field with a discrete evaluation and the ring
of integers $\calo=\bfk[[t]]$. In fact $\calo$ is a split complete discrete valuation in the sense that the residue field is a subring.   Each representation $ (V, p)$ of
dimension $n$ as an $A$-vector space defines an $\calo$-submodule
$ M=p(V)$, which can be thought as an element of the set $\calg_n(\calo)$ of
all $ \calo$-submodules.   The group $GL_n(A)$ acts on $\calg_n(\calo)$. We will also
study the $GL_n(A)$-orbits in $ \calg_n(\calo)$. The  map $\on{RB}(V)\rightarrow \calg_n(\calo)$ ($p\mapsto p(V)$) is $GL_n(A)$-equivariant. We will show that this map is fiber bundle with fiber $GL_n(\calo)$-orbits of $\bfk[t^-1]$-submodules of $V$.  It turns these structure are closely related the
affine Grassmannians $GL_n(A)/GL_n(\calo)$. The $GL_n(\calo)$ of $ \bfk[t^{-1}]$-lattices in $ A^n$ opposite affine Grassmannians.
%The special case when
%$n=1$, the representations are related to ideal classes. The isomorphism classes
%of all regular representations the type $ (A, p)$ is exactly the ideal classes. For
%general $n$, classification of all isomorphism classes of all $\calo$-submodules
%$M$ in $A^n$  can be thought as a generalisation of ideal class sets in higher
%dimensions, which is exactly the set of all $GL_n(A)$-orbits in $\calg_n(\calo)$.

\begin{theorem}   For the complete discrete valuation field $A=\bfk((t))$ with valuation
ring $\calo=\bfk((t))$, and any $n$, the number of $GL_n(A)$-orbits in
$\calg_{n}(\calo)$ is finite.
\end{theorem}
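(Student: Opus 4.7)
The plan is to classify the $GL_n(A)$-orbits on $\calg_n(\calo)$ by two integer invariants, each taking only finitely many values.

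For an $\calo$-submodule $M \subseteq A^n$, define the $A$-subspaces
\[
W(M) := \{x \in A^n : Ax \subseteq M\} \subseteq M \subseteq AM \subseteq A^n,
\]
and the integers $d(M) := \dim_A W(M)$, $k(M) := \dim_A(AM)$. Then $0 \le d \le k \le n$, and both invariants are manifestly $GL_n(A)$-equivariant, since $g \cdot W(M) = W(gM)$ and $g \cdot AM = A(gM)$. Hence the pair $(d, k)$ takes at most $\binom{n+2}{2}$ distinct values.

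The main task is to show that these invariants classify orbits, by producing the standard model $A^d \oplus \calo^{k-d} \oplus 0^{n-k}$ in each orbit. Using the transitive $GL_n(A)$-action on $d$-dimensional $A$-subspaces of $A^n$, first conjugate $W(M)$ to $A e_1 \oplus \cdots \oplus A e_d$; writing $V := A e_{d+1} \oplus \cdots \oplus A e_n$ for the complementary subspace, the inclusion $W(M) \subseteq M$ forces $M = W(M) \oplus M_0$ with $M_0 := M \cap V \subseteq V \cong A^{n-d}$, and a direct check gives $W(M_0) = 0$. Provided $M_0$ is finitely generated over $\calo$, the PID structure theorem together with the basis-extension argument used in the full-rank lattice case will move $M_0$ to $\calo e_{d+1} \oplus \cdots \oplus \calo e_k$ via a block element of $GL_n(A)$ that fixes $W(M)$, placing $M$ in the orbit of the standard model.

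The main obstacle is the technical lemma asserting that $W(M_0) = 0$ forces $M_0$ to be finitely generated over $\calo$. I would prove this by studying the image $\bar M_0 \subseteq (A/\calo)^{n-d}$: since $(A/\calo)^{n-d}$ is the Artinian injective cogenerator for the complete DVR $\calo$, Matlis duality identifies its submodules with quotients of $\calo^{n-d}$, so a non-finitely-generated $\bar M_0$ must contain a direct summand isomorphic to $A/\calo$. Lifting a generating tower $\{\bar\epsilon_{-N}\}_{N \ge 1}$ of that summand to elements $m_N \in M_0 \cap t^{-N}\calo^{n-d}$ with $t m_{N+1} - m_N \in \calo^{n-d}$ makes the sequence $f_N := t^N m_N \in \calo^{n-d}$ $t$-adically Cauchy, with limit $u \in \calo^{n-d} \setminus t\calo^{n-d}$. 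Because the finitely generated submodule $M_0 \cap t^{-k}\calo^{n-d}$ is closed (by Artin--Rees and Krull intersection) in the complete $\calo$-module $t^{-k}\calo^{n-d}$, the limit $t^{-k} u = \lim_N t^{N-k} m_N$ lies in $M_0$ for each $k \ge 0$. This yields $Au \subseteq M_0$ with $u \neq 0$, contradicting $W(M_0) = 0$ and establishing the lemma, whereupon the number of orbits equals the number of pairs $(d, k)$ with $0 \le d \le k \le n$, namely $\binom{n+2}{2}$, which is finite.
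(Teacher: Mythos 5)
Your argument is correct and shares the paper's global structure: decompose $M = \widetilde{M}\oplus M_f$ with $\widetilde{M}$ the $A$-subspace of divisible elements, observe that the pair $(\dim_A\widetilde{M}, \dim_A AM)$ is a complete $GL_n(A)$-invariant, and count pairs with $0\le d\le k\le n$ to get $\binom{n+2}{2}$ orbits; the conjugation to the standard model $A^d\oplus\calo^{k-d}\oplus 0$ is the same as in the paper. The genuinely different ingredient is your proof of the finiteness lemma, namely that $W(M_0)=\{0\}$ forces $M_0$ to be a finitely generated $\calo$-module (the paper's Theorem \ref{finite-generation}). The paper proves this by induction on $\dim_A V$: Lemma \ref{lem:O-subm} passes to the quotient by a line $Az$ and constructs, by hand, a $t$-adically convergent series whose sum would force $y\in Az$, showing $\widetilde{\overline{M}}=\{0\}$ whenever $\widetilde{M}=\{0\}$. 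You instead pass directly to the Artinian image $\overline{M}_0\subseteq (A/\calo)^{n-d}$ and invoke Matlis duality over the complete local ring $\calo=\bfk[[t]]$: every submodule of $(A/\calo)^{n-d}$ has the form $(A/\calo)^a\oplus\bigoplus_i\calo/(t^{k_i})$, so a non--finitely-generated $\overline{M}_0$ must contain a Pr\"{u}fer summand $A/\calo$; you then lift a compatible generating tower of that summand and use Artin--Rees closedness of the finitely generated submodules $M_0\cap t^{-k}\calo^{n-d}$ in the complete module $t^{-k}\calo^{n-d}$ to produce a nonzero $u$ with $Au\subseteq M_0$, contradicting $W(M_0)=\{0\}$. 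Both proofs pivot on $t$-adic completeness of $\calo$. Yours replaces the paper's careful recursive bookkeeping of the elements $y_\ell$ and exponents $n_\ell, c_\ell$ with a single invocation of standard structure theory, so it is conceptually tighter and shorter; the paper's is self-contained and elementary, and as the authors note it also makes visible that the hypothesis that the residue field be a subring of $\calo$ is not actually used.
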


We remark that the ideal class group of a Dedekind domain can be arbitrarily large by a result of Claborn \cite{Cl, LG}.  We don't know if the theorem above holds with the complete discrete valuation ring $\calo$ replaced by any PID. One also notices that for any Dedekind domain $D$ with field $K$ of fractions. If $ \calm_n(D)$ is the set of all finitely generated $D$-submodules  of $K^n$. The group $GL_n(K)$ acts on $\calm_{n}(D)$. If we denote by $cl_n(D)=\calm_n(D)/GL_n(K)$ the set of all $GL_n(K)$-orbits in $\calm_n(D)$. Then $\calm_1(D)/GL_1(K)$ is the ideal class group $cl(D)$ of $D$ and it acts on the set $\calm_n(D)/GL_n(K)$, by $J\cdot M=JM$ for each fractional ideal $J$ in $K$ and each finitely generated $D$-submodule $M$ of $K^n$. One of the question is to describe the $cl(D)$-orbits in $ cl_n(D)$. In the case that $ D$ is complete discrete evaluation ring, there are only finitely many $ cl(D)$-orbits in $cl_n(D)$ for all $n$.

\subsection{} The main results are the following.

(a) The category of representations of finite dimensional $(A, P)$-modules is
a semisimple abelian category with exactly three  isomorphism classes of
irreducible objects. They are
$(A, 0)$, $(A, \on{Id})$, and $ (A, P)$ (Theorem~\ref{rb-decomp}).

(b) If $ V$ is an $n$-dimensional $A$-vector space, then the number of
$GL_A(V)$-orbits on the set of all $\rsd$s of $V$ is $(n+2)(n+1)/2$, which indexed by the triples $(k,r, l)\in \mathbb{Z}^{3}_{\geq 0}$ with $ k+r+l=n$ (Corollary 4.6).

(c) The number of $ GL_n(A)$-orbits in $\calg_n(\calo)$ is finite and in one-to-one correspondence to the isomorphism classes of $ n$-dimensional RB-representations (Corollary 5.2).

(d) The $ GL_n(A)$-orbit of type $(k,r,l)\in \mathbb{Z}^{3}_{\geq 0}$ in $\calg_n(\calo)$ is a fibre bundle over the $A$-variety of all flags of type $(k, r, l)$ with fibre being the affine Grassmanian $GL_r(A)/GL_r(\calo)$.

(e) The $GL_n(A)$-orbits in $ \rsd(A^n)$ is also characterised as quotient groups  (Theorem 5.6).

We remark that the set of all RSDs in $V$ is closely related to affine Grassmanians and has
an infinite dimensional variety structure. The geometric structure of these orbits
will be studied elsewhere. Also the argument in the paper works for any complete discrete valuation ring in its field of fractions. The applications of vector bundles over curves will be studied in a future paper.

\subsection{} The main tool in classifying all representations  on a finite dimensional
$A$-vector space is to first classify all $ \bfk[[t]]$-submodules $M$ of $V$.
In case $V$ is one-dimensional, then all such $M$ are straightforward.
For higher dimensions, each such $M$ contains a largest
$\bfk[[t]]$-submodule $\widetilde{M}$ consisting of all divisible
elements. $\widetilde{M}$ is the largest $ A$-subspace of $V$ contained in $M$.
One of the results, which might be well known to number theorists, is that $M$
is a finitely generated $\calo$-module if $ \tilde{M}=0$. But we could not find a good
reference and provide a proof here. In particular $M/\widetilde{M}$ is a finitely
generated $\bfk[[t]]$-submodule of $ V/\widetilde{M}$. Using this results,
we are able to decompose every $(A,P)$-module as direct sum of three types of
modules, one part is $p=0$, which will be called {\em trivial}, one part is $p=\on{Id}$,
and the part satisfies $p(V)$ is lattice in $ V$, which we will called it {\em regular}.
This was done in Section 3. In Section 4, we classify all regular modules.
It turns out that two regular modules are isomorphic if and only if they have
the same dimension over $A$.

\subsection{Acknowledgement} This research was initiated when both authors
were participating the KITPC program, {\em Mathematical Method from Physics}
during the summer 2013 in Beijing. This program was supported by both KITPC
and Morningside Center of Mathematics. Authors thank both institutions for the
hosting the programs, in particular the authors appreciate the housing support
provided in MCM. The authors also thank Li Guo for stimulating discussions
and for encouragement.

\section{Regular-Singular decompositions of a module}
In this section, we first define the notion of a regular-singular decomposition (RSD) of vector spaces for the Laurent series Rota-Baxter algebra. Then we
give the definition of the Rota-Baxter module (RB-module). Finally, we identify the RSDs with RB-modules.

%\subsection{\rsd~of vector spaces over any discrete valuation field}

Let $\calo:=\bfk[[t]]$, then $\calo $ is a complete discrete valuation ring  with the maximal idea $t\calo$ and $\bfk$ the field of residue.  Its  group of units is
$\calo^{*}=GL_1(\calo)=\calo\backslash t\calo$. Let $A$ be the field of fractions of $\calo$, then $A=\bfk((t))$ be  the Laurent series algebra and $A^{-}:=t^{-}\bfk[t^{-1}] \subseteq \bfk[t^{-1}]$.   Then $A = \calo \oplus A^{-}$ as $\bfk$-vector space. $A$ has a Rota-Baxter algebra structure with Rota-Baxter operator $P: A  \rightarrow A$ which is the projection  onto $\calo$ under this direct sum decomposition.  Although we are only interested in the  field $\bfk$ for Rota-Baxter algebras. 

\begin{defn}
{\rm
Let $V$ be an $A$-vector space. A pair $(M,N)$ of $\bfk$-subspaces of $V$ is called  a {\em regular-singular decomposition} ({\bf RSD}) if
\begin{enumerate}
\item $V = M \oplus N$ as $\bfk$-vector space;
\item $M$ is an $\calo$-submodule of $V$;
\item $N$ is a $\bfk[t^{-1}]$-submodule of $V$.
\end{enumerate}
}
\end{defn}

Let
$$
\rsd(V):=\left\{(M,N)~\big|~(M,N)~\mbox{is a regular-singular decomposition of $V$}\right\}.
$$
Let $GL_{A}(V)$ be the group of invertible $A$-linear transformations on $V$. Then $GL_{A}(V)$ acts on $\rsd(V)$ by
$$
\phi(M,N) = (\phi(M),\phi(N)), \quad \forall \phi \in GL_{A}(V).
$$
Note that $\phi$ is $A$-linear, so $\phi(M)$ is an $\calo$-submodule of $V$ and $\phi(N)$ is a $\bfk[t^{-1}]$-submodule.

Next we want to determine the $GL_{A}(V)$-orbits in $\rsd(V)$.

\begin{defn}

An $A$-module $V$ with a $\bfk$-linear map $p:V\longrightarrow V$ is called a {\em $Rota$-$Baxter$ ($RB$)-module over $(A,P)$} if
\begin{equation}\label{eq:rbmodule}
P(a)p(x) = p(P(a)x + ap(x) - ax), \quad \forall a \in A, x \in V.
\end{equation}
Such $p$ is called a {\em $Rota$-$Baxter$ representation operator on $V$}, the pair $(V,p)$ is called an {\em $RB$-module over $(A,P)$}.
\end{defn}

Let
$$
RB(V):=\left\{p:V\longrightarrow V~\big|~p ~\mbox{is a Rota-Baxter representation operator on $V$} \right\}.
$$
An RB-module homomorphism $f: (V,p) \longrightarrow (W,p')$ between two RB-modules  $(V,p)$ and $(W,p')$  is an $A$-module homomorphism $f: V \longrightarrow W$ such that $f \circ p = p' \circ f$. $(V,p)$ and $(W,p')$ are isomorphism if $f$ is an $A$-module isomorphism. In this case $V \cong W$ and $p' = f \circ p \circ f^{-1}$.

Then for an $RB$-algebra $(A,P)$, the category of all $RB$-modules over $(A,P)$  which are finitely generated as $A$-modules is an abelian category.  We are interested in classifying all isomorphism classes of this category. For each fixed finitely generated $A$-module $V$, the group $GL_{A}(V)$ acts on the set $RB(V)$ by conjugation $\phi\cdot p=\phi\circ p\circ \phi^{-1}$ for any $\phi\in GL_{A}(V)$, $p\in RB(V)$.

 Our goal is to classify the isomorphism classes of $RB$-module structures  on $V$. This is amount to classifying the $GL_{A}(V)$-orbits in $RB(V)$.

\begin{prop}\label{prop:idem}
For an $A$-module $V$, a $\bfk$-linear map $p:V\longrightarrow V$ is a $Rota$-$Baxter$ representation operator  if and only if $p$  satisfies the following conditions:
\begin{enumerate}
\item[{\rm(a)}] $p^{2} =p$;
\item[{\rm (b)}] $p(V)$ is an $\calo$-submodule of $V$;
\item[{\rm (c)}] $\Ker(p)$ is a $\bfk[t^{-1}]$-submodule of $V$.
\end{enumerate}
\end{prop}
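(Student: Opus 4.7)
The plan is to exploit the splitting $A = \calo \oplus A^{-}$ together with the fact that $P$ is the projection onto $\calo$, and to specialise the Rota-Baxter relation \eqref{eq:rbmodule} at elements $a$ coming from each summand.

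For the forward direction, I would proceed in three substitutions. First, set $a = 1 \in \calo$, so $P(1) = 1$, $a x = x$, and $a p(x) = p(x)$; the relation collapses to $p(x) = p(p(x))$, giving $p^2 = p$, which is (a). Next, take an arbitrary $a \in \calo$, so $P(a) = a$; the relation becomes $a\,p(x) = p(a\, p(x))$ for all $x \in V$, which says precisely that every element of $p(V)$ remains in the image of $p$ after multiplication by elements of $\calo$, so $p(V)$ is an $\calo$-submodule, giving (b). Finally, take $a \in A^{-}$, so $P(a) = 0$; the relation becomes $0 = p(a\,p(x) - a x) = p\bigl(a(p(x) - x)\bigr)$. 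Since $p^{2} = p$ forces $x - p(x) \in \ker(p)$ for every $x$, and such elements exhaust $\ker(p)$, this says $A^{-} \cdot \ker(p) \subseteq \ker(p)$; combined with $\bfk \cdot \ker(p) \subseteq \ker(p)$ (which is automatic), we obtain that $\ker(p)$ is a $\bfk \oplus A^{-} = \bfk[t^{-1}]$-submodule, giving (c).

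For the converse direction, assume (a), (b), (c) and let $a \in A$, $x \in V$ be arbitrary. Write $a = a_{+} + a_{-}$ with $a_{+} := P(a) \in \calo$ and $a_{-} := a - P(a) \in A^{-}$. Then expand
\begin{align*}
p\bigl(P(a) x + a\, p(x) - a x\bigr)
&= p\bigl(a_{+} x + (a_{+} + a_{-}) p(x) - (a_{+} + a_{-}) x\bigr) \\
&= p\bigl(a_{+} p(x)\bigr) + p\bigl(a_{-}(p(x) - x)\bigr).
\end{align*}
By (b), $a_{+} p(x)$ lies in $p(V)$, and (a) implies $p$ is the identity on $p(V)$, so $p(a_{+} p(x)) = a_{+} p(x) = P(a) p(x)$. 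By (a), $p(x) - x \in \ker(p)$, and by (c), $a_{-}(p(x) - x) \in \ker(p)$, so the second summand vanishes. We obtain $p(P(a)x + ap(x) - ax) = P(a) p(x)$, which is the Rota-Baxter module relation.

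The argument is essentially a bookkeeping exercise, and the only step requiring mild care is recognising that under (a), the subspace $\ker(p)$ coincides with the image of $\operatorname{Id} - p$, so that the substitution $a \in A^{-}$ actually sees every element of $\ker(p)$; without this identification, (c) would be weaker than what the Rota-Baxter relation delivers. No step should present a serious obstacle, since everything follows from the decomposition $a = P(a) + (a - P(a))$ and the complementary behaviour of $p$ on $p(V)$ versus $\ker(p)$ after (a) is in hand.
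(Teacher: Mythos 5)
Your proof is correct and follows essentially the same route as the paper's: both reduce the Rota-Baxter relation \eqref{eq:rbmodule} by specialising $a$ to the two summands of $A = \calo \oplus A^{-}$ (yielding $a\,p(x) = p(a\,p(x))$ for $a\in\calo$ and $p(a\,p(x)) = p(ax)$ for $a\in A^{-}$), and both reverse the implication by using (a) to split $V = p(V)\oplus\ker(p)$, (b) to handle the $\calo$-part, and (c) to kill the $A^{-}$-part. Your write-up is a little more explicit about writing $a = P(a) + (a-P(a))$ in the converse, but the underlying argument is the same.
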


\begin{proof}
It follows from Eq. (\ref{eq:rbmodule}) that $p$ is a $Rota$-$Baxter$ representation operator if and only if
$$
\left\{\begin{array}{ll} {\rm (i)} \quad ap(x) = p(ap(x)),&\forall a\in \calo, x \in V;\\
{\rm (ii)} \quad  p(ax) = p(ap(x)),& \forall a \in \bfk[t^{-1}], x \in V. \end{array}\right.
$$

Taking $a=1$ in (i) or (ii), we have $p^{2}=p$. Furthermore, (i) implies that $p(V)$ is an $\calo$-submodule of $V$ and (ii) implies $\ker(p)$ is a $\bfk[t^{-1}]$-submodule of $V$.

Conversely, assume that $p$ satisfies (a), (b) and (c). Then for any $a\in \calo$, $x \in V$,  the fact that $
ap(x) \in p(V)  $ implies that $ ap(x) = p(y), $ for some $y \in V$.
By (a), we have $p(y) = p(p(y))$. Thus (i) holds.

Similarly $p^{2}=p$, implies that $x-p(x) \in \ker (p)$ for all $x\in V$. By (c), $a(x-p(x)) \in \ker (p)$ for any $a\in \bfk[t^{-1}]$. Thus $p(a(x-p(x))) =0$ and  $p(ax)=p(ap(x))$. Hence  (ii) holds.
\end{proof}

\begin{theorem}\label{theorem p} For any $A$-module $V$,
the map $\Psi:RB(V) \longrightarrow \rsd(V)$ defined by $\Psi(p) = (\im (p),\ker (p))$ is a $GL_{A}(V)$-equivalent bijection.
\end{theorem}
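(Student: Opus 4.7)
The plan is to leverage Proposition~\ref{prop:idem} to reduce the theorem to an essentially formal statement about idempotent projections. Once we know that every $p \in RB(V)$ is a $\bfk$-linear idempotent whose image is an $\calo$-submodule and whose kernel is a $\bfk[t^{-1}]$-submodule, the map $\Psi$ amounts to passing between idempotents and their associated direct sum decompositions, and the extra algebraic conditions on $\im(p)$ and $\ker(p)$ match the definition of an RSD exactly.

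More concretely, I would organize the proof in four short steps. First, I would check that $\Psi$ is well-defined: given $p \in RB(V)$, idempotency of $p$ implies $V = \im(p) \oplus \ker(p)$ as $\bfk$-vector spaces, and conditions (b) and (c) of Proposition~\ref{prop:idem} say precisely that the pair $(\im(p),\ker(p))$ satisfies conditions (2) and (3) in the definition of $\rsd(V)$. Second, I would prove injectivity by observing that if $\Psi(p) = \Psi(q) = (M,N)$, then both $p$ and $q$ are the unique $\bfk$-linear projection of $V$ onto $M$ with kernel $N$, so $p = q$. Third, for surjectivity, given an RSD $(M,N)$ I define $p \colon V \to V$ by $p(m+n) = m$ for $m \in M$, $n \in N$; this $p$ is $\bfk$-linear, satisfies $p^2 = p$, has $\im(p) = M$ an $\calo$-submodule, and $\ker(p) = N$ a $\bfk[t^{-1}]$-submodule, so Proposition~\ref{prop:idem} places $p$ in $RB(V)$, and clearly $\Psi(p) = (M,N)$.

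Finally, for $GL_A(V)$-equivariance, I would compute directly: for $\phi \in GL_A(V)$ and $p \in RB(V)$, the conjugate $\phi \cdot p = \phi \circ p \circ \phi^{-1}$ satisfies $\im(\phi \circ p \circ \phi^{-1}) = \phi(\im p)$ and $\ker(\phi \circ p \circ \phi^{-1}) = \phi(\ker p)$, which is exactly $\phi \cdot \Psi(p)$ under the $GL_A(V)$-action on $\rsd(V)$.

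There is no genuine obstacle here; this is a formal verification whose whole content is already packaged in Proposition~\ref{prop:idem}. The only point worth flagging is that the projection constructed in the surjectivity step must be recorded as merely $\bfk$-linear (not $A$-linear) --- which is automatic from the $\bfk$-linear direct sum $V = M \oplus N$ --- since in general $M$ is not $A$-stable and there is no reason to expect $p$ to commute with the $A$-action. Once this is noted, all four steps are short and routine.
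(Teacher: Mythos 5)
Your proposal is correct and follows essentially the same route as the paper: both reduce to Proposition~\ref{prop:idem}, observe that an idempotent yields the $\bfk$-direct sum $V = \im(p)\oplus\ker(p)$, and use the projection associated to an RSD to invert $\Psi$, with equivariance verified by the same direct computation. The only cosmetic difference is that you split the inverse-map construction into separate injectivity and surjectivity steps, while the paper exhibits $\Psi^{-1}$ directly.
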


\begin{proof}
By Proposition \ref{prop:idem}, $\im(p)=p(V)$ is an $\calo$-submodule of $V$ and $\ker(p)$ is a $\bfk[t^{-1}]$-submodule of $V$. Since $p^{2}=p$, then $V = \im (p) \oplus \ker(p)$ as $\bfk$-vector space. Hence, $\Psi(p) = (\im (p),\ker(p))$ is a regular-singular decomposition of $V$.

It is straightforward to verify that
$$
\Psi(\phi \circ p \circ \phi^{-1}) = (\phi(\im(p)),\phi(\Ker(p))), \quad \forall \phi\in GL_{A}(V).
$$
To define the inverse map $\Psi^{-1}: \rsd(V) \longrightarrow RB(V)$, for each $(M,N) \in \rsd(V)$, one defines $p : V \longrightarrow V$ by $p=p_{r}|_{M}$ the projection of $V$ to $M$. Then $p$ satisfies (a), (b) and (c) of Proposition \ref{prop:idem}. Hence for any $(M, N)\in \rsd(V)$, $p\in RB(V)$, $\Psi\circ \Psi^{-1}(M, N)=(M, N)$, $\Psi^{-1}\circ \Psi(p)=(p)$.\end{proof}

We will use Theorem \ref{theorem p} to classify the isomorphism classes of $RB$-module structure on an $A$-module $V$.

Let us first work on special case $ V=A$ and classify regular-singular decompositions of $V$ up to isomorphisms. It is easy to see that if $M=A$ and $N=0$, then $p=id$; if $M=0$, $N=A$, then $p=0$.

\begin{lemma}\label{lemma2.3}
Let $M\subseteq A$ be an $\calo$-submodule. If $\displaystyle x=\sum_{i=n}^{\infty}c_{i}t^{i}\in M$ where $c_{n}\neq 0$, then $t^{n}\in M$, $\calo t^{n}\subseteq M$.
\end{lemma}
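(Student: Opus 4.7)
The plan is to use the fact that $\calo=\bfk[[t]]$ is a local ring whose units are exactly the power series with nonzero constant term, together with the $\calo$-stability of $M$.

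First I would factor the given element as
\[
x=\sum_{i=n}^{\infty}c_i t^i = t^n\cdot u, \qquad u:=\sum_{j=0}^{\infty}c_{n+j}t^j \in \calo.
\]
The series $u$ has constant term $c_n\neq 0$, hence $u\in\calo^{\ast}=\calo\setminus t\calo$ and there exists $u^{-1}\in\calo$. Since $M$ is an $\calo$-submodule of $A$ and $x\in M$, we have $u^{-1}x\in M$. But $u^{-1}x=u^{-1}t^n u=t^n$, so $t^n\in M$.

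Once $t^n\in M$ has been established, the inclusion $\calo t^n\subseteq M$ is immediate from $\calo$-stability: for every $a\in\calo$, the element $a t^n=a\cdot t^n$ lies in $M$.

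There is really no significant obstacle here; the only thing to notice (the ``key step'') is the standard fact that a formal power series is invertible in $\bfk[[t]]$ precisely when its constant term is nonzero, which is exactly the hypothesis $c_n\neq 0$. This is what allows us to extract the pure monomial $t^n$ from the possibly infinite tail of $x$ while staying inside $M$.
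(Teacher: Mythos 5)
Your proof is correct and follows exactly the same route as the paper's: factor $x=t^n u$ with $u=\sum_{j\geq 0}c_{n+j}t^j$, note that $c_n\neq 0$ makes $u$ a unit of $\calo$, so $t^n=u^{-1}x\in M$ by $\calo$-stability, and then $\calo t^n\subseteq M$ follows immediately.
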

\begin{proof}
Let $\displaystyle x=\sum_{i=n}^{\infty}c_{i}t^{i}=t^{n}\sum_{i=0}^{\infty}c_{i+n}t^{i}\in M$ and $\displaystyle b:=\sum_{i=0}^{\infty}c_{i+n}t^{i}$ where $c_{n}\neq 0$. Then $b\in \calo^{*}$, that is $b$ is an invertible element in $\calo$. Thus we have $b^{-1}\in \calo$, $t^{n}=b^{-1}x\in M$ and $\calo t^{n}\subseteq M$.
\end{proof}

Let $t\in \calo$ be the generator of the maximal idea of $\calo $, and for any $x\in A$, define
$$\nu(x):= min~\{ n \in \mathbb{Z}~|~x\in~t^n\calo~\},$$
then $\nu(x)$ is the valuation of $x$ at the unique maximal ideal $(t)\subseteq \calo$

\begin{prop} \label{prop:2.4} For $ V=A$ as $A$-module,
if $(M, N)\in \rsd(A)$ and $M\neq 0$, $M\neq A$, then there exists $n\in \mathbb{Z}$ such that $M=\calo t^{n}$.
\end{prop}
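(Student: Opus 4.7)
The plan is to extract $n$ as the minimum valuation occurring on the nonzero elements of $M$, and then to pinch $M$ between $\calo t^n$ and itself via Lemma~\ref{lemma2.3}. Concretely, I set $S := \{\nu(x) \mid 0 \neq x \in M\} \subseteq \mathbb{Z}$; the hypothesis $M \neq 0$ makes $S$ non-empty, and as soon as $S$ is bounded below the well-ordering of $\mathbb{Z}$ produces $n := \min S$.

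The main (and essentially only non-routine) obstacle is showing that $S$ is bounded below, and this is exactly where the hypothesis $M \neq A$ must enter. I would argue by contradiction: if $S$ were unbounded below, then for every $N \in \mathbb{Z}$ one could pick $x \in M$ with $m := \nu(x) \leq N$, and Lemma~\ref{lemma2.3} would give $\calo t^m \subseteq M$; since $\calo t^N \subseteq \calo t^m$ whenever $m \leq N$, this forces $\calo t^N \subseteq M$ for every integer $N$, and hence $M \supseteq \bigcup_{N \in \mathbb{Z}} \calo t^N = A$, contradicting $M \neq A$. Therefore $n := \min S$ is a well-defined integer.

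With $n$ in hand, picking any $x \in M$ with $\nu(x) = n$ and applying Lemma~\ref{lemma2.3} directly yields $\calo t^n \subseteq M$. For the reverse inclusion, every nonzero $y \in M$ has $\nu(y) \geq n$ by minimality of $n$, and therefore $y \in t^{\nu(y)} \calo \subseteq t^n \calo$. Combining the two inclusions gives $M = \calo t^n$. I note that the argument uses only that $M$ is a proper nonzero $\calo$-submodule of $A$; the companion subspace $N$, its $\bfk[t^{-1}]$-structure, and the direct sum condition $A = M \oplus N$ are not needed here and will presumably enter only when the complementary summands $N$ are pinned down in a subsequent step.
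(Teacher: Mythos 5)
Your proof is correct, and it is a genuine (if small) improvement on the paper's own argument. Both proofs use Lemma~\ref{lemma2.3} to generate $\calo t^{\nu(x)}\subseteq M$ from any nonzero $x\in M$, and both then take $n$ to be the minimal valuation occurring in $M$. The difference is in how the lower bound on those valuations is obtained. The paper picks a nonzero $z\in N$ and observes that if $\nu(x)\le\nu(z)$ for some $x\in M$, then $z\in\calo t^{\nu(x)}\subseteq M$, contradicting $M\cap N=\{0\}$; so the direct-sum structure and the nonzero complement $N$ are used essentially. You instead show directly that if the set of valuations in $M$ were unbounded below then $M\supseteq\bigcup_{N\in\mathbb{Z}}\calo t^{N}=A$, using the fact that every formal Laurent series has a finite lowest-order exponent. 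This is cleaner and shows, as you correctly remark, that the conclusion depends only on $M$ being a proper nonzero $\calo$-submodule of $A$; the hypothesis $N\ne 0$, which in the RSD setting is equivalent to $M\ne A$, is a proxy rather than a necessity. This is precisely the slightly stronger statement the paper itself records in the Remark following the proposition, so your version can be read as a proof of that Remark, which the paper does not separately justify. Both inclusions in your final step are correct.
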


\begin{proof}
If $M\neq 0$ and $M\neq A$, then $N\neq 0$, $N\neq A$. Thus there exists $0\neq z\in N$ with $\displaystyle z=\sum_{i=m}^{\infty}c_{i}t^{i}(c_{m}\neq 0)$. And for any $0\neq x\in M$, let $\displaystyle x=\sum_{j=n}^{\infty}a_{j}t^{j}(a_{n}\neq 0)$, then $n>m$. Otherwise, if there exists $x\in M$ such that $\nu(x)\leq m$. Then $t^{\nu(x)}\calo\subseteq M$, $z\in t^{\nu(x)}\calo\subseteq M$ gives a contradiction.

For any $x\in M\backslash \{0\}$, $z\in N\backslash \{0\}$, choose $n$ to be minimal integer such that $n\geq \nu(z)$. Then we have $M=\calo t^{n}$ by the Lemma \ref{lemma2.3}.
\end{proof}

\begin{remark} Proposition \ref{prop:2.4} shows that for the complete discrete valuation
ring $ \calo$ with field of fractions $A$, an $\calo$-submodule $M$ of $ A$ is a
fractional ideal if and only if $ M\neq A$ and $M\neq 0$. Since the ideal class group of a discrete
valuation ring is always identity, thus there is an element $ 0\neq a \in A$ such that
$ aM=\calo$. The completeness is essential. This does not generalise to non-local PIDs as one can easily find
$\bfk[t]$ submodules of $ \bfk(t)$ that are not fractional ideals.
\end{remark}
\begin{coro}  \label{coro:2.7}
For $ V=A$ as $A$-module, every $(M, N)\in \rsd(A)$  is isomorphic to $(\calo, N')$.
\end{coro}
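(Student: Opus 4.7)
The plan is to apply the $GL_A(A)$-action to move the regular part into standard position. By Proposition \ref{prop:2.4}, in the nontrivial case ($M\neq 0$ and $M\neq A$) we have $M=\calo t^n$ for some $n\in\mathbb{Z}$, so it will suffice to exhibit an $A$-linear automorphism of $A$ sending $\calo t^n$ to $\calo$.

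First I would identify $GL_A(A)$ with $A^*$ via scalar multiplication, and take $\phi\in GL_A(A)$ to be the map $\phi(x)=t^{-n}x$. Since $t^{-n}\in A^*$ this map is a well-defined $A$-linear automorphism of $A$. Then $\phi(M)=t^{-n}\cdot\calo t^n=\calo$, and I would define $N':=\phi(N)=t^{-n}N$, so that $\phi(M,N)=(\calo,N')$.

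Next I would check that $(\calo,N')$ is genuinely in $\rsd(A)$. Since $\phi$ is a $\bfk$-linear bijection, the decomposition $A=M\oplus N$ is transported to $A=\phi(M)\oplus\phi(N)=\calo\oplus N'$. Because $\phi$ is $A$-linear it is in particular $\bfk[t^{-1}]$-linear, so the fact that $N$ is a $\bfk[t^{-1}]$-submodule forces $N'=\phi(N)$ to be a $\bfk[t^{-1}]$-submodule as well. This establishes the claim in the generic case.

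There is no substantial obstacle here: once Proposition \ref{prop:2.4} is invoked, the argument reduces to noting that the action of $A^*$ on fractional $\calo$-submodules of $A$ is transitive on nonzero proper ones, with $t^{-n}$ the explicit transporter. The only point worth stating is that the two degenerate RSDs $(0,A)$ and $(A,0)$, which lie in their own singleton $GL_A(A)$-orbits, fall outside the scope of this corollary and are handled separately in the general classification of Section 4.
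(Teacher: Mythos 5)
Your proof is correct and follows essentially the same route as the paper: reduce to $M=\calo t^{n}$ via Proposition~\ref{prop:2.4} and then normalize by the scalar $t^{-n}\in A^{*}=GL_{A}(A)$ (the paper writes $\phi(x)=t^{n}x$, a harmless sign slip). Your closing remark that the degenerate cases $(A,0)$ and $(0,A)$ are singleton orbits lying outside the intended scope matches the paper's own qualifier that the claim concerns RSDs which are ``neither divisible nor trivial.''
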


\begin{proof}
In fact, every RSD of $A$, which is neither divisible nor trivial, has the form $(\calo t^{n}, N)$ for some $n$. Then apply the invertible $\phi \in GL_{A}(V)$ defined by
 $\phi(x)=t^{n}x$ for any $x\in V$, and some n.
\end{proof}

\begin{lemma}
Let $(\calo, N)$ be a regular-singular decomposition of A. Then there exist $b_{n}=t^{-n}+r_{n}(t)$ with $r_{n}(t)\in \calo$ for any $n\geq 1$ such that $\{b_{n}~|~n\geq 1\}$ is a $\bfk$-basis of N.
\end{lemma}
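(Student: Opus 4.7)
The plan is to use the $\bfk$-direct sum decomposition $A=\calo\oplus N$ to manufacture the basis elements directly, by projecting the negative powers of $t$ onto $N$. Specifically, for each $n\geq 1$, the element $t^{-n}\in A$ decomposes uniquely as $t^{-n}=-r_n(t)+b_n$ with $r_n(t)\in\calo$ and $b_n\in N$, so that $b_n=t^{-n}+r_n(t)$ has the required form. The claim is then that $\{b_n\mid n\geq 1\}$ is a $\bfk$-basis of $N$.

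For linear independence, I would take a finite relation $\sum_{n} c_n b_n=0$, rewrite it as $\sum_n c_n t^{-n}=-\sum_n c_n r_n(t)$, and observe that the left-hand side lies in $A^-=t^{-1}\bfk[t^{-1}]$ while the right-hand side lies in $\calo$. Since $A^-\cap\calo=0$ inside $A$, both sides vanish, and comparing coefficients of distinct powers $t^{-n}$ in $\sum_n c_n t^{-n}=0$ forces every $c_n=0$.

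For spanning, I would start with an arbitrary $z\in N$. Since $z\in A=\bfk((t))$, it has the form $z=\sum_{i\geq -k} a_i t^i$ for some $k\geq 0$, with only finitely many negative powers. Form the element
\begin{equation*}
z' := z-\sum_{n=1}^{k} a_{-n}\,b_n = \Bigl(z-\sum_{n=1}^k a_{-n}t^{-n}\Bigr)-\sum_{n=1}^k a_{-n}r_n(t).
\end{equation*}
The first parenthesized piece has vanishing principal part so lies in $\calo$, and the second sum is in $\calo$ as well, hence $z'\in\calo$. On the other hand $z\in N$ and each $b_n\in N$, so $z'\in N$. Therefore $z'\in\calo\cap N=0$, giving $z=\sum_{n=1}^k a_{-n}b_n$, as required.

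There is no real obstacle here; the only thing to watch is the finiteness of the negative tail of any element of $\bfk((t))$, which is what keeps the sums appearing in both steps finite and lets the direct sum decomposition $A=\calo\oplus A^-$ (equivalently $A=\calo\oplus N$) do all the work.
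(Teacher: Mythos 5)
Your construction of $b_n$ as the $N$-component of $t^{-n}$ under the decomposition $A=\calo\oplus N$ is exactly the paper's construction in different clothing: the paper pulls back the basis $\{\,t^{-n}+\calo : n\geq 1\,\}$ of $A/\calo$ under the $\bfk$-linear isomorphism $N\cong A/\calo$ induced by restricting the quotient map to $N$, which produces the same elements $b_n$. Your proof is correct and simply makes explicit the linear-independence and spanning checks that the paper delegates to that isomorphism.
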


\begin{proof}
Since $A=\calo \oplus N$, then the restriction of the quotient map $A\rightarrow A/\calo$ to $N$ defines a $\bfk$-linear isomorphism $ A/\calo \cong N$. Let $b_n$ be the inverse image of $t^{-n}$ for each $n\geq 1$. Then $b_n$ has the required form.
\end{proof}

\begin{lemma} For any $n\geq 1$,
 $b_{n}\in \bfk[t^{-1}]b_{1}$,
\end{lemma}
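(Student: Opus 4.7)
The plan is to exploit a canonical $\bfk$-linear isomorphism between $N$ and $A^-$ and to match elements of $N$ by their singular parts. The two decompositions $A=\calo\oplus N$ (from the RSD) and $A=\calo\oplus A^-$ (the canonical one) yield a $\bfk$-linear isomorphism
$$\pi:N\longrightarrow A^-,$$
obtained by composing the inclusion $N\hookrightarrow A$ with the projection $A=\calo\oplus A^-\twoheadrightarrow A^-$; equivalently, $\pi(x)$ is the singular part of $x$ under the canonical splitting. Because $b_n=t^{-n}+r_n(t)$ with $r_n(t)\in\calo$, one has $\pi(b_n)=t^{-n}$, so the family $\{b_n\}_{n\geq 1}$ corresponds under $\pi$ to the standard $\bfk$-basis $\{t^{-n}\}_{n\geq 1}$ of $A^-$. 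In particular, an element of $N$ is completely and uniquely determined by its singular part, and this is the only real tool needed.

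With this in hand, I would argue by induction on $n$. The base case $n=1$ is immediate since $b_1=1\cdot b_1\in\bfk[t^{-1}]b_1$. For the inductive step, write $r_1(t)=\sum_{i\geq 0}c_i t^i\in\calo$; since $N$ is a $\bfk[t^{-1}]$-submodule and $b_1\in N$, we have $t^{-(n-1)}b_1\in N$, and
$$t^{-(n-1)}b_1=t^{-n}+\sum_{i\geq 0}c_i t^{i-(n-1)}.$$
Splitting the right-hand side into singular and regular parts gives
$$\pi\bigl(t^{-(n-1)}b_1\bigr)=t^{-n}+\sum_{i=0}^{n-2}c_i t^{-(n-1-i)}.$$
On the other hand, the element $b_n+\sum_{i=0}^{n-2}c_i b_{n-1-i}\in N$ has precisely the same singular part, as $\pi(b_j)=t^{-j}$. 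By injectivity of $\pi$,
$$b_n=t^{-(n-1)}b_1-\sum_{i=0}^{n-2}c_i b_{n-1-i}.$$

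The inductive hypothesis yields $b_{n-1-i}\in\bfk[t^{-1}]b_1$ for each $0\leq i\leq n-2$, and $t^{-(n-1)}b_1\in\bfk[t^{-1}]b_1$ trivially, so $b_n\in\bfk[t^{-1}]b_1$, completing the induction. The only delicate point is identifying the correct range of indices in the singular-part sum (it terminates because $r_1(t)\in\calo$ makes all but finitely many monomials in $t^{-(n-1)}r_1(t)$ regular), but once $\pi$ is introduced there is no real obstacle, since the argument reduces to a finite $\bfk$-linear identity rather than any manipulation of infinite series.
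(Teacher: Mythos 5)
Your proof is correct, and it takes a recognizably different route from the paper's, though both hinge on the same fundamental fact that $N\cap\calo=\{0\}$. The paper argues by \emph{ordinary} induction with the short recursion $b_{n+1}=t^{-1}b_n-r_n(0)b_1$: it checks directly that $b_{n+1}-t^{-1}b_n+r_n(0)b_1$ lands in $\calo$, observes it also lies in $N$, and concludes it vanishes. Each step uses the constant term of the ``drift'' $r_n$ attached to the current $b_n$. You instead introduce the singular-part projection $\pi:N\to A^-$ as an explicit $\bfk$-linear isomorphism with $\pi(b_j)=t^{-j}$, compute the singular part of $t^{-(n-1)}b_1$ once and for all in terms of the coefficients $c_i$ of $r_1$ alone, and match it against $b_n+\sum_{i=0}^{n-2}c_ib_{n-1-i}$; this yields $b_n=t^{-(n-1)}b_1-\sum_{i=0}^{n-2}c_ib_{n-1-i}$, a recursion involving all of $b_1,\dots,b_{n-1}$, so you are implicitly running a \emph{strong} induction (you should say so explicitly, but the logic is sound). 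What your version buys is conceptual clarity (you never need any $r_n$ beyond $r_1$, and $\pi$ packages ``same singular part $\Rightarrow$ same element'' cleanly) and it foreshadows the $\bfk$-linear isomorphism $N\cong A/\calo$ already used to produce the $b_n$; what the paper's version buys is a single two-term linear recursion that is marginally easier to iterate. Both are correct.
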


\begin{proof}
We use induction on $n\geq 1$ to prove the statement. If $n=1$, it is clear that $b_{1}\in \bfk[t^{-1}]b_{1}$. For $n\geq 2$, assume that $b_{n}\in \bfk[t^{-1}]b_{1}$ . Then
\begin{eqnarray*}
b_{n+1}-t^{-1}b_{n}+r_{n}(0)b_{1}&=&t^{-n-1}+r_{n+1}(t)-t^{-n-1}-t^{-1}
r_{n}(t)+r_{n}(0)t^{-1}+r_{n}(0)r_{1}(t)\\
&=&-(r_{n}(t)-r_{n}(0))t^{-1}+r_{n+1}(t)+r_{n}(0)r_{1}(t)\in \calo.
\end{eqnarray*}

Since $b_{n+1}-t^{-1}b_{n}+r_{n}(0)b_{1}\in N$, we have $b_{n+1}-t^{-1}b_{n}+r_{n}(0)b_{1}=0$.  Then by induction assumption
$b_{n+1}=t^{-1}b_{n}-r_{n}(0)b_{1}\in \bfk[t^{-1}]b_{1}.$
\end{proof}

\begin{prop} \label{prop:2.10}
Let $(\calo, N)\in \rsd(A)$. Then there exists $b\in \calo^{\ast}$ such that $N=A^{-}b$.
\end{prop}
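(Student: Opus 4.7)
The plan is to combine the two preceding lemmas to see that $N$ is cyclic as a $\bfk[t^{-1}]$-module generated by $b_1$, and then rescale $b_1$ by the element $t b_1 \in \calo^*$ to produce the required $b$.

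First I would derive the identity $N = \bfk[t^{-1}] b_1$. The preceding lemma gives $b_n \in \bfk[t^{-1}] b_1$ for every $n \geq 1$, and since $\{b_n\}_{n \geq 1}$ is a $\bfk$-basis of $N$, this yields $N \subseteq \bfk[t^{-1}] b_1$. The reverse inclusion is immediate from $b_1 \in N$ together with the hypothesis that $N$ is a $\bfk[t^{-1}]$-submodule of $A$.

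Next I would set $b := t b_1 = 1 + t\, r_1(t)$. Since $r_1(t) \in \calo$, this element has constant term $1$, hence lies in $1 + t\calo \subseteq \calo^*$. The identity $b_1 = t^{-1} b$ then gives
\[
N \;=\; \bfk[t^{-1}]\, b_1 \;=\; \bfk[t^{-1}]\, t^{-1} b \;=\; t^{-1}\bfk[t^{-1}]\, b \;=\; A^{-} b,
\]
where the middle equality uses commutativity of multiplication in $A$ and the last equality is the defining formula $A^{-} = t^{-1}\bfk[t^{-1}]$.

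No step here is really an obstacle: the preceding lemmas have already isolated the cyclic $\bfk[t^{-1}]$-module structure of $N$, and the entire content of this proposition is to normalize the generator so that its singular part equals exactly $t^{-1}$. That normalization is accomplished by multiplying $b_1$ by the single unit $t b_1$ of $\calo$.
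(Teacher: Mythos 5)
Your proof is correct and follows the same route as the paper: use the two preceding lemmas to get $N=\bfk[t^{-1}]b_1$, then observe that $b:=t b_1=1+t\,r_1(t)\in\calo^\ast$ and rewrite $b_1=t^{-1}b$ to conclude $N=A^{-}b$. The paper's argument is just a terser version of exactly this normalization.
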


\begin{proof}
We have $N\subseteq \bfk[t^{-1}]b_{1} \subseteq N$.
Write $b_{1}=t^{-1}b$ where $b=1+t\cdot r_{1}(t)\in \calo^{*}$, then $N=A^{-}b$.
\end{proof}

\begin{theorem} Let $(M, N) \in \rsd(A)$, then either $(M,N)=(A, 0)$, or $(M,N)=(0,A)$, or $(M,N)\cong (\calo, A^-)$.
\end{theorem}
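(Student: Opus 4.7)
The plan is a short case analysis that assembles the three preceding results. First, I would dispose of the two degenerate situations: if $M = A$, the direct-sum relation $V = M \oplus N$ forces $N = 0$, giving $(M,N) = (A,0)$; dually, $M = 0$ yields $(M,N) = (0,A)$.

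In the remaining case $M \neq 0$ and $M \neq A$, my strategy is to chain two $A$-linear isomorphisms to reach the normal form $(\calo, A^-)$. Corollary \ref{coro:2.7} supplies an element of $GL_A(A)$ (multiplication by a suitable power of $t$) that carries $(M,N)$ to $(\calo, N')$ for some $\bfk[t^{-1}]$-submodule $N'$. Proposition \ref{prop:2.10} then identifies $N'$ as $A^- b$ for some $b \in \calo^\ast$. The second reduction is realized by multiplication by $b^{-1}$: this lies in $GL_A(A)$ since $b \in A^\ast$; because $b^{-1} \in \calo^\ast$ it sends $\calo$ to itself; and by commutativity of $A$ it sends $A^- b$ to $b^{-1} A^- b = A^-$. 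Composing the two maps produces the required isomorphism $(M,N) \cong (\calo, A^-)$.

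I do not anticipate any serious obstacle, as all the hard work—classifying the $\calo$-submodules of $A$ and producing the normal form for $N$—has already been done in Proposition \ref{prop:2.4} and Proposition \ref{prop:2.10}. The only point deserving care is the fact that $b$ in Proposition \ref{prop:2.10} is actually a \emph{unit} of $\calo$, not merely a nonzero element of $A$; this is what makes multiplication by $b^{-1}$ preserve $\calo$ and thereby legitimately realize the final isomorphism.
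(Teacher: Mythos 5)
Your proposal is correct and follows essentially the same route as the paper: reduce to $M = \calo$ via Corollary~\ref{coro:2.7}, invoke Proposition~\ref{prop:2.10} to write $N = A^- b$ with $b \in \calo^*$, and rescale by $b^{-1}$. The only difference is that you make the degenerate cases $M = 0$ and $M = A$ explicit, which the paper leaves implicit; this is a minor improvement in exposition but not a different argument.
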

\begin{proof} By Corollary~\ref{coro:2.7}, we can assume $M=\calo$. Now by Proposition~\ref{prop:2.10}, we can assume $ N=A^{-1}b$ for some $b\in \calo^*$. The map $ \phi: A\rightarrow A$ defined by $ \phi(x)=xb^{-1}$ satisfies $\phi(\calo)=\calo$. Hence, $ \phi(\calo, N)=(\calo, A^{-})$.
\end{proof}

\begin{coro} There are exactly three isomorphism classes of Rota-Baxter $(A,P)$-modules on $V=A$. They are $p=0$, $p=\on{Id}$, and $p=P$.
\end{coro}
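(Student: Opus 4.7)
The plan is to invoke Theorem~\ref{theorem p} to translate the classification problem for Rota-Baxter $(A,P)$-module structures on $V=A$ into the classification of $GL_A(A)$-orbits in $\rsd(A)$, and then apply the immediately preceding theorem, which says that every element of $\rsd(A)$ is $GL_A(A)$-equivalent to exactly one of $(A,0)$, $(0,A)$, or $(\calo, A^-)$.

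First, by Theorem~\ref{theorem p}, $\Psi:RB(A)\to \rsd(A)$ is a $GL_A(A)$-equivariant bijection, so two RB-operators $p,p'\in RB(A)$ give isomorphic $(A,P)$-modules if and only if $(\im(p),\ker(p))$ and $(\im(p'),\ker(p'))$ lie in the same $GL_A(A)$-orbit of $\rsd(A)$. Hence the number of isomorphism classes of RB-structures on $V=A$ equals the number of $GL_A(A)$-orbits in $\rsd(A)$, which by the preceding theorem is exactly three.

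Next I identify a representative $p$ in each orbit by inverting $\Psi$. For the orbit of $(A,0)$, the projection onto $M=A$ along $N=0$ is the identity, so $p=\on{Id}$. For the orbit of $(0,A)$, the projection onto $M=0$ is the zero map, so $p=0$. For the orbit of $(\calo, A^-)$, the projection of $A=\calo\oplus A^-$ onto $\calo$ is by definition the Rota-Baxter operator $P$ given in the Introduction, so $p=P$.

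Finally, I should note that these three representatives are pairwise non-isomorphic: this is immediate since $\im(\on{Id})=A$, $\im(0)=0$, and $\im(P)=\calo$ are three distinct $\calo$-submodules of $A$ (the first two are the only $A$-submodules, and $\calo$ is neither $0$ nor $A$), so they cannot lie in the same $GL_A(A)$-orbit. The only possible sticking point is checking that the correspondence $\Psi^{-1}$ really sends $(\calo, A^-)$ to $P$, but this is tautological from the construction of $P$ as the projection along the decomposition $A=\calo\oplus A^-$.
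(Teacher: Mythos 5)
Your proof is correct and follows exactly the route the paper intends: the corollary is stated immediately after the theorem classifying $\rsd(A)$ into the three orbits, and the translation through the $GL_A(A)$-equivariant bijection $\Psi$ of Theorem~\ref{theorem p} is precisely the intended argument. Your added check that the three orbit representatives are pairwise distinct (via the images $A$, $0$, $\calo$, with $\calo$ not an $A$-submodule so it cannot be carried to $0$ or $A$ by any $\phi\in GL_A(A)$) is a worthwhile completion, since the theorem's ``either/or'' phrasing alone only gives ``at most three.''
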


\section{Finite generation and regular-singular decomposition in higher ranks}
Let us recall that for an $ \calo$-module $M$, an element $ x\in M$  is  called {\em divisible} if for  any $0\neq a\in \calo$, there is a $y\in M$ such that $ x=ay$.  We call $M$ {\em divisible} if every element is divisible.

If  $M=A$, then  $M$ is  divisible. We will say that the corresponding $RB$-module is {\em divisible} . When $M=0$, we will say the corresponding $RB$-module is {\em trivial} since $p=0$. An $RB$-module $(V, p)$ is called {\em regular} is it does not have nonzero trivial and nonzero divisible submodules.

Before we proceed to classify all $RB$-modules over $A$, we need to prove a result that every $ \calo$-submodule $M$ of a finite dimensional  $A$-vector space $V$  is finitely generated if and only if $M$ has no divisible vector.
The result in this section works for any complete discrete valuation ring $ \calo$ with $A$ being the field of fractions. The results in this section should be well-known to number theorists on fractional submodules in a finite dimensional vector space, but we could not find a good reference and thus provide a proof here for the readers' convenience.  This result is definitely false for $\mathbb{Z}$  by considering the $\mathbb{Z}[q^{-1}]$-submodules of $V$ for any prime number $q$.

\subsection{}
Let $V$ be a finite dimensional $A$-vector space and $M\subseteq V$ be an $\calo$-submodule. Let
$$\widetilde{M}=\{x\in M~|~Ax\in M\}.$$
Then $\widetilde{M}$ is the set of all divisible elements in $M$ and  $\widetilde{M}\subseteq V$ is an $A$-vector subspace.

It is clear that if $M$ is a finitely generated $\calo$-submodule of $V$, then $\widetilde{M}=\{0\}$ since $M$ is torsion-free and thus a finitely generated free submodule of $V$.

Assume that $M$ is an $ \calo$-submodule of an $A$-vector space $V$. For any $0\neq x \in V$, let
$$I_M(x):=\{a\in A~|~ax\in M\}.$$
Then $I_M(x)$ is an $\calo$-submodule of $A$. By Proposition~\ref{prop:2.4},  exactly one of the following three cases appears: (a) $I_M(x)=A$ if and only if $ Ax\subseteq M$,  (b) $I_M(x)=\{0\}$ if and only if $ Ax\cap M=\{0\}$, and (c) $I_M(x)=\calo t^n$ for some $n \in \mathbb{Z}$ if and only if $  \{0\} \subsetneq Ax \cap M\subsetneq Ax $.

\begin{defn}  We call a regular-singular decomposition $(M, N)$ of an A-vector space V {\em regular} if  $\widetilde{M}=\{0\}$ and $ AM=V$.  We say that an $RB$-module over $(A, P)$ is {\em regular}, if the corresponding regular-singular decomposition is regular.
\end{defn} We will prove in Theorem~\ref{finite-generation} that $ \widetilde{M}=0$ implies that $ M$ is a finitely generated $\calo$-module and thus $ M$ is $\calo$-free of finite rank. Thus $(M, N)$ is regular if and only if $ M$ is a lattice in $V$. We will classify all regular RSDs in $V$ in Section \ref{sec:classify}.

We remark that the regularity condition corresponding to regular $D$-modules over a curve (see \cite[5.1.3]{HTT}).  Also $(M,N)$ is regular if and only if the corresponding $RB$-module does not have nonzero divisible and nonzero trivial $RB$-submodules, as we have defined earlier. 

For any $x\in V\backslash \{0\}$, define
$$\nu_M(x):=\min \{n\in \mathbb{Z} ~|~t^{n}x\in M\}.$$
It should be noted that it is possible that $ \nu_M(x)=-\infty $ or $+\infty$.

\begin{prop} \label{prop:3.2}
Let $V$ be an $A$-vector space and $M\subseteq V$ an $\calo$-submodule. For any $0\neq x\in V$,
then either there exists $n\in \mathbb{Z}$ such that $I_M(x)=\calo t^{n}$ with $n=\nu_M(x)$ or $I_M(x)=A$ with $x\in \widetilde{M} $ and $\nu_M(x)=-\infty$, or $I_M(x)=\{0\}$ with $ \nu_M(x)=+\infty$.
\end{prop}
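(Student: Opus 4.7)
The plan is to reduce the trichotomy for $I_M(x)$ to the classification of $\calo$-submodules of the Laurent-series field $A$ already established in Section~2. The shape of the statement---three disjoint possibilities, each matching a shape of $\calo$-submodule of $A$---strongly suggests this route, and the paragraph immediately preceding the proposition essentially previews it.

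First I would verify that $I_M(x)$ is an $\calo$-submodule of $A$. This is a direct computation: for $a,b\in I_M(x)$ and $c\in\calo$ one has $(a+cb)x = ax + c(bx)$, and both summands lie in $M$ because $M$ is an $\calo$-submodule of $V$, so $a+cb\in I_M(x)$. Next I would invoke Lemma~\ref{lemma2.3} and Proposition~\ref{prop:2.4} to classify $I_M(x)$ as either $A$, $\{0\}$, or $\calo t^n$ for a unique $n\in\mathbb{Z}$. Although Proposition~\ref{prop:2.4} is written for the regular part of an RSD of $A$, its content for present purposes is just that every proper nonzero $\calo$-submodule $J$ of $A$ equals $\calo t^n$ with $n=\min\{\nu(a): 0\neq a\in J\}$; this follows from Lemma~\ref{lemma2.3} together with the observation that if $\nu|_{J\setminus\{0\}}$ were unbounded below, then Lemma~\ref{lemma2.3} would force $\calo t^{\nu(a_k)}\subseteq J$ for $a_k\in J$ with $\nu(a_k)\to-\infty$, yielding $J=A$ in the limit.

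Finally, the identification of each case with the corresponding value of $\nu_M(x)$ is routine. When $I_M(x)=A$, every $a\in A$ gives $ax\in M$, so $Ax\subseteq M$; by definition this says $x\in\widetilde{M}$, and $t^mx\in M$ for every $m\in\mathbb{Z}$, so $\nu_M(x)=-\infty$. When $I_M(x)=\{0\}$, no $t^mx$ lies in $M$, so the defining set of $\nu_M(x)$ is empty and $\nu_M(x)=+\infty$ by convention. When $I_M(x)=\calo t^n$, membership $t^m\in\calo t^n$ is equivalent to $m\geq n$, so $\nu_M(x)=n$, agreeing with the claim. The main (and really only) obstacle is the mild strengthening of Proposition~\ref{prop:2.4} from RSD-components to arbitrary $\calo$-submodules of $A$ noted above; once that is in place the rest is formal bookkeeping driven by Lemma~\ref{lemma2.3}.
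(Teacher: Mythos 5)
Your proposal matches the paper's proof in substance. The paper already observes, in the paragraph immediately before the proposition, that $I_M(x)$ is an $\calo$-submodule of $A$ and appeals to Proposition~\ref{prop:2.4} for the trichotomy; its proof of Proposition~\ref{prop:3.2} then directly checks membership of $t^n x$ in $M$ and cites ``the proof of Proposition~\ref{prop:2.4}'' for the fact that all proper nonzero $\calo$-submodules of $A$ are of the form $\calo t^m$. You explicitly flag, and correctly close, the small formal gap that Proposition~\ref{prop:2.4} as \emph{stated} applies only to RSD components: your extension to arbitrary $\calo$-submodules via Lemma~\ref{lemma2.3} (bounded-below valuations plus the minimality argument, and the unbounded case giving $J=A$) is exactly what makes the paper's citation of ``the proof of Proposition~\ref{prop:2.4}'' honest. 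So this is the same route, written a bit more carefully; no new idea, but a cleaner bookkeeping of a step the paper leaves implicit.
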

\begin{proof}
If $\widetilde{M}$ is nonempty, then $0\neq x\in \widetilde{M}$ if and only if $I_M(x)=A$. In this case $ \nu_M(x)=-\infty$. If $ t^nx\not \in M$ for all $ n \in \mathbb{Z}$, then $ I_M(x)=\{0\}$ and $Ax\cap M=\{0\}$.

For $0\neq x\in V\backslash \widetilde{M}$ with $ \{0\}  \subsetneq Ax \cap M\subsetneq Ax $, there exists $n\in \mathbb{Z}$ such that $t^{n}x \in M$ and $t^{n-1}x \not\in M$. Since $M$ is an $\calo$-submodule, then we have $\calo t^{n}x \subseteq M$, i.e. $\calo t^{n} \subseteq I_M(x)$ and $\calo t^{n-1}\not\in I_M(x)$. Since $ \calo$ is a DVR, and all fractional ideals of $\calo $ in $A$ are of the form $ \calo t^m$ with $m \in \mathbb{Z}$ (see the proof of Proposition~\ref{prop:2.4}). Hence  $I_M(x)=\calo t^n$ with $ n=\nu_M(x)$.
\end{proof}

\begin{lemma}\label{lem:O-subm}
Let $V$ be a finite dimensional $A$-vector space and $M\subseteq V$ an $\calo$-submodule. If $0\neq z\in M$, set $\overline{V}:=V / Az$. For the quotient map $\pi:V\rightarrow \overline{V}$, we define  $\overline{M}:=\pi(M)\subseteq \overline{V}$ as an $\calo$-submodule of $\overline{V}$. If $\widetilde{M}=\{0\}$, then $\widetilde{\overline{M}}=\{0\}$.
\end{lemma}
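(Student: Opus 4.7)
I would argue by contradiction. Assume $\overline{x}\in\widetilde{\overline{M}}$ is nonzero and lift it to some $x\in M$. Since $\overline{x}\neq 0$, the vectors $x$ and $z$ are $A$-linearly independent, so $W:=Ax\oplus Az$ is two-dimensional over $A$, and the hypothesis $A\overline{x}\subseteq\overline{M}$ reads $W\subseteq M+Az$. Setting $N:=M\cap W$, this yields $W=N+Az$, i.e.\ a short exact sequence of $\calo$-modules
\begin{equation*}
0\longrightarrow M\cap Az\longrightarrow N\longrightarrow A\overline{x}\longrightarrow 0.
\end{equation*}

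By Proposition~\ref{prop:3.2} and $\widetilde{M}=\{0\}$, the $\calo$-ideals $I_M(z)$ and $I_M(x)$ are nonzero and proper, so they have the form $\calo t^{\nu_M(z)}$ and $\calo t^{\nu_M(x)}$. Replacing $z$ by $t^{\nu_M(z)}z$ (which preserves the line $Az$, hence $\overline{V}$) and $x$ by $t^{\nu_M(x)}x$ (which only rescales $\overline{x}$), I may assume $M\cap Az=\calo z$ and $M\cap Ax=\calo x$. Then for each $a\in A$ there is a unique residue $\overline{b}(a)\in A/\calo$ such that $ax+bz\in N$ for every representative $b$, and the resulting map $\overline{b}\colon A\to A/\calo$ is $\calo$-linear with $\overline{b}(1)=0$.

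The core step is to show this extension splits, i.e.\ to produce $\alpha\in\calo$ with $\overline{b}(a)\equiv\alpha a\pmod{\calo}$ for all $a\in A$. Writing each $\overline{b}(t^{-n})$ in its canonical $A^{-}$-representative and using the relations $t^n\overline{b}(t^{-n})\equiv 0\pmod{\calo}$ and $t\,\overline{b}(t^{-n})\equiv\overline{b}(t^{-n+1})\pmod{\calo}$, an induction on $n$ gives
\begin{equation*}
\overline{b}(t^{-n})\equiv \sum_{i=0}^{n-1}\alpha_i t^{-n+i}\pmod{\calo}
\end{equation*}
for scalars $\alpha_i\in\bfk$ that are independent of $n$. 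By $t$-adic completeness of $\calo$, the series $\alpha:=\sum_{i\ge 0}\alpha_i t^i$ defines an element of $\calo=\bfk[[t]]$, and one checks $\overline{b}(a)\equiv\alpha a\pmod{\calo}$ for every $a\in A$.

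Finally, $y:=x+\alpha z\in M$ is a divisible element: for every $c\in A$, $cy=(cx+b_c z)+(c\alpha-b_c)z\in N+\calo z\subseteq M$, so $Ay\subseteq M$ and thus $y\in\widetilde{M}=\{0\}$. Hence $x=-\alpha z\in Az$, contradicting $\overline{x}\neq 0$, and the lemma follows. The main obstacle is the lifting computation in the third paragraph: it is essentially a hands-on proof that $\mathrm{Ext}^1_\calo(A,\calo)=0$ for the complete DVR $\calo$, and the completeness hypothesis is indispensable here, since the power series defining $\alpha$ can genuinely be infinite.
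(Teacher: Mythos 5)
Your proof is correct; it shares the same skeleton as the paper's argument but packages the key construction differently, and the difference is worth noting.

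Both proofs normalize $z$ (via Proposition~\ref{prop:3.2}) so that $I_M(z)=\calo$, and both end by producing a single $\alpha\in\calo$ such that $x+\alpha z$ (the paper writes $y-\alpha z$) is $A$-divisible inside $M$, forcing $x\in Az$. The difference is in how $\alpha$ is extracted. The paper runs a hands-on successive approximation: it first shows each $a_n$ with $t^{-n}y-a_nz\in M$ must have $a_n=t^{-n}r_n$ with $r_n\in\calo^*$, then in an inductive loop peels off scalars $c_\ell\in\bfk$ and exponent increments $n_\ell\ge 1$ so that the partial sums $\sum_i c_i t^{\sum_{j<i}n_j}$ approximate $\alpha$, and it concludes by invoking $\bigcap_n t^nM=\widetilde M=\{0\}$. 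You instead encode the hypothesis $A\overline{x}\subseteq\overline M$ as an $\calo$-linear map $\overline b\colon A\to A/\calo$ with $\overline b(1)=0$, observe that $t^n\overline b(t^{-n})=0$ and $t\,\overline b(t^{-n})=\overline b(t^{-n+1})$ force the Laurent coefficients of $\overline b(t^{-n})$ to stabilize as $n$ grows, and read off $\alpha$ in one step from completeness. This is the cleaner route: it avoids the paper's bookkeeping of the unit factors $r_n$ and the gaps $n_\ell$, and it makes transparent (as you say) that the mechanism is exactly the splitting of the $\calo$-extension $0\to\calo z\to N\to A\overline{x}\to 0$, i.e.\ the vanishing of $\mathrm{Ext}^1_\calo(A,\calo)$ for a complete DVR. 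One small remark: the normalization $M\cap Ax=\calo x$ is not actually used anywhere in your argument ($\overline b(1)=0$ needs only $x\in M$), so you could drop it; and the well-definedness of $\overline b(a)\in A/\calo$ rests precisely on the normalization $M\cap Az=\calo z$, which you do use.
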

\begin{proof}
If $\widetilde{M}=\{0\}$, then, by Proposition~\ref{prop:3.2}, there exists $k\in \mathbb{Z}$ such that $I_{M}(z)=\calo t^{k}$.  Now replacing $z$ with $t^{k}z$ and then we can assume that $ z\in M$ and $t^{-1}z \notin M$. So we have for any $a\in A$, $az\in M$ if and only if $a\in \calo$, i.e., $ I_M(z)=\calo$.

Suppose, on the contrary, that $\widetilde{\overline{M}}\neq \{0\}$. Then there exists $y\in M$ such that $0\neq \overline{y}=\pi(y) \in \overline{M}$ and $I_{\overline{M}}(\overline{y})=A$.

Since $\widetilde{M}=\{0\}$, then there exists $m \in \mathbb{Z}$ such that $I(y)=\calo t^{m}$. Similar to the case of $z$, replacing $y$ with $t^{m}y$, we can take $ y \in M $ such that
\begin{equation}\label{y-condition}
I_M(y)=\calo\; \text{ i.e., } \;\nu_M(y)=0, \; \text{ and }\; I_{\overline{M}}(\overline{y})=A.
\end{equation}
 Note that $y$ and $z$ are $A$-linearly independent in $V$.

By assumption, $t^{-n}y+Az \in \overline{M}$ for each $n\in \mathbb{N}$. Thus  there exists $a_{n}\in A$ such that $x_{n}=t^{-n}y-a_{n}z\in M$. Write $a_{n}=t^{\nu(a_{n})} r_{n}$ with $r_{n} \in \calo$. Since $t^{-1}y \notin M$ and $z \in M$, then $\nu(a_{n})<0$ when $n > 0$.

Since $t^{-n}y=x_{n}+a_{n}z$, we have $\nu(a_{n})=-n$. In fact, by
considering $t^{-n-\nu(a_{n})}y=t^{-\nu(a_{n})}x_{n}+r_{n}z \in M$,
we have $-n-\nu(a_{n})\geq 0$ and then $\nu(a_{n})\leq -n$.
Similarly, $t^{\nu(a_{n})+n} r_{n}z=y-t^{n}x_{n}\in M$ implies $\nu(a_{n})\geq -n$.
Thus $a_{n}=t^{-n} r_{n}$ with $r_{n}\in \calo^*$ and $x_{n}=t^{-n}(y-r_{n}z) \in M$.

For $n=1$, we have $r_{1}\neq 0$ and  $y-r_{1}z=tx_{1}\in tM$.
If we write $r_{1}:=c_{1}+td$ with $c_{1}\in \bfk$ and $d\in \calo$,
then $y-c_{1}z \in tM$.

Denote $y_{1}:=y$ and then $y_{1}-c_{1}z\in tM$. Denote $ y_2:=t^{-n_1}(y_1-c_1z) \in M$ for some  $n_{1}\geq 1$ such
that
$t^{-1}y_{2} \notin M$. Then
$\overline{y_{2}}=t^{-n_{1}}\overline{y_{1}}\neq 0$ and $\overline{y_{2}}$
still satisfies the same condition \eqref{y-condition} as  $y_{1}$ does. Hence there exists
$c_{2}\in \bfk$ such that $y_{2}-c_{2}z\in tM$.

Inductively, for each $\ell \in \mathbb{N}_{+}$, we denote
$y_{\ell}:=t^{-n_{\ell-1}}(y_{\ell-1}-c_{\ell-1}z) \in M$ with $ n_{\ell-1}\geq 1$ such that $t^{-1}y_{\ell} \notin M$ and there exists
$c_{\ell}\in \bfk$ such that $y_{\ell}-c_{\ell}z \in tM$.

Since $y_{\ell}=t^{-n_{\ell-1}}(y_{\ell-1}-c_{\ell-1}z) \in M$, we have $y_{\ell-1}=t^{n_{\ell-1}}y_{\ell}+c_{\ell-1}z$.
\begin{eqnarray*}
\displaystyle
y&=&y_{1}=t^{n_{1}}y_{2}+c_{1}z\\
&=&t^{n_{1}}(t^{n_{2}}y_{3}+c_{2}z)+c_{1}z\\
&=&t^{n_{1}+n_{2}}y_{3}+(c_{2}t^{n_{1}}+c_{1})z\\
&=&\cdots\\
&=&t^{\sum _{i=1}^{\ell}n_{i}}y_{\ell+1}+(\sum _{i=1}^{\ell}c_{i}t^{\sum _{j=1}^{i-1}n_{j}})z.
\end{eqnarray*}

Thus $\displaystyle y-(\sum _{i=1}^{\ell}c_{i}t^{\sum _{j=1}^{i-1}n_{j}})z
=t^{\sum _{i=1}^{\ell}n_{i}}y_{\ell+1}\in t^{\sum _{i=1}^{\ell}n_{i}}M$
for all $\ell \in \mathbb{N}_{+}$.
Since for any $n\in \mathbb{N}$, there exists $\ell \in \mathbb{N}_{+}$ such that $\displaystyle \sum _{i=1}^{\ell}n_{i}\geq n$, we have
$$
\displaystyle y-(\sum _{i=1}^{\infty}c_{i}t^{\sum _{j=1}^{i-1}n_{j}})z=y-(\sum _{i=1}^{\ell}c_{i}t^{\sum _{j=1}^{i-1}n_{j}})z-(\sum _{i=1}^{\infty}c_{\ell+i}t^{\sum _{j=1}^{i-1}n_{\ell+j}+\sum _{j=1}^{\ell}n_{j}})z \in \bigcap_{n=0}^{\infty}t^{n}M.
$$
Note that $\displaystyle \cap_{n=0}^{\infty}t^{n}M=\{0\}$ since $x\in \widetilde{M}$ if and only if $x \in \displaystyle \cap_{n=0}^{\infty}t^{n}M$.

Hence $\displaystyle y=(\sum _{i=1}^{\infty}c_{i}t^{\sum _{j=1}^{i-1}n_{j}})z$, but $\displaystyle \sum _{i=1}^{\infty}c_{i}t^{\sum _{j=1}^{i-1}n_{j}} \in \calo$. This implies $y\in Az$ and $\overline{y}=\{0\}$ contradicting the assumption that $\overline{y}\neq 0$. Hence $\widetilde{\overline{M}}=\{0\}$.
\end{proof}

\begin{remark} In the proof we only used the property  that $\calo$ is a complete discrete valuation ring. The fact that the residue field $ \bfk$ is a subring of $\calo$ is not essential as one can choose $ c_i\in \calo$ invertible.
\end{remark}

\begin{theorem} \label{finite-generation}
Let $V$ be a finite dimensional $A$-vector space and $M\subseteq V$ an $\calo$-submodule. If $\widetilde{M}=\{0\}$, then $M$ is a finitely generated $\calo$-submodule of V.
\end{theorem}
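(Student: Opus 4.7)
The plan is to prove Theorem~\ref{finite-generation} by induction on $d = \dim_A V$, using Lemma~\ref{lem:O-subm} as the main input that lets the induction advance.

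For the base case $d=1$, we may identify $V$ with $A$. Since $\widetilde{M}=0$, the module $M$ cannot equal $A$ (otherwise every element of $M$ would be divisible). By Proposition~\ref{prop:2.4} the only remaining possibilities are $M=0$ or $M=\calo t^n$ for some $n\in\mathbb{Z}$; each of these is clearly finitely generated over $\calo$.

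For the inductive step, assume the result holds whenever $\dim_A V < d$, and let $\dim_A V = d \geq 2$. If $M=0$ there is nothing to prove, so pick any $0\neq z\in M$. Form $\overline{V}:=V/Az$ and let $\pi:V\to\overline{V}$ be the quotient map with $\overline{M}:=\pi(M)$. Since $\dim_A \overline{V}=d-1$, Lemma~\ref{lem:O-subm} gives $\widetilde{\overline{M}}=\{0\}$, so by the induction hypothesis $\overline{M}$ is a finitely generated $\calo$-module.

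It remains to control the kernel of $\pi|_M$, namely $K:=M\cap Az$. The $\calo$-module map $A\to Az$, $a\mapsto az$, is an isomorphism, so $K$ corresponds to the $\calo$-submodule $I_M(z)=\{a\in A\mid az\in M\}$ of $A$. Since $z\in M$ we have $1\in I_M(z)$, so $I_M(z)\neq 0$; and $I_M(z)\neq A$ because otherwise $Az\subseteq M$ would force $z\in\widetilde{M}=0$, contradicting $z\neq 0$. By Proposition~\ref{prop:3.2} we conclude $I_M(z)=\calo t^n$ for some $n\in\mathbb{Z}$, which is a cyclic (hence finitely generated) $\calo$-module. Consequently $K$ is finitely generated. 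From the short exact sequence of $\calo$-modules
\begin{equation*}
0 \longrightarrow K \longrightarrow M \longrightarrow \overline{M} \longrightarrow 0
\end{equation*}
with both outer terms finitely generated, we deduce that $M$ itself is finitely generated, completing the induction.

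The main obstacle was already handled in Lemma~\ref{lem:O-subm}, where the completeness of $\calo$ was used (via $\bigcap_n t^nM=\{0\}$) to show that divisibility is preserved under the quotient $V\to V/Az$. Once that lemma is in hand, the present theorem is essentially a clean dimension induction together with the one-dimensional classification from Proposition~\ref{prop:2.4}, and the delicate arithmetic is no longer needed.
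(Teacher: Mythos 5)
Your proof is correct and takes essentially the same route as the paper: induction on $\dim_A V$, quotienting by $Az$ and invoking Lemma~\ref{lem:O-subm} to preserve the hypothesis $\widetilde{\overline{M}}=\{0\}$, then assembling $M$ from the finitely generated kernel $M\cap Az$ and quotient $\overline{M}$ via the short exact sequence. One small citation point: Proposition~\ref{prop:2.4} is stated only for pairs in $\rsd(A)$, so in the base case (and in identifying $K$) you should really appeal to Lemma~\ref{lemma2.3} or the remark following Proposition~\ref{prop:2.4}, which give that every nonzero proper $\calo$-submodule of $A$ is $\calo t^n$; otherwise your argument matches the paper's.
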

\begin{proof}
We use induction on $\dim_{A}V$. If $\dim_{A}V=1$, take any basis $z\in M$, then $I_{M}(z)=\calo t^{k}$ for some
$k\in \mathbb{Z}$, thus for any $a\in A$, $a\cdot t^{k}z \in M$ if and only if
$a\in \calo$, i.e. $M=\calo \cdot t^{k}z$.

Assume that the theorem is true for any vector spaces of $\dim V < n$, we
now consider $\dim V=n \geq 2$.

Take $0\neq z\in M$, and $\overline{V}:=V/Az$, for the quotient map
$\pi:V\longrightarrow \overline{V}$, we define
$\overline{M}=\pi(M)\subseteq \overline{V}$ be an $\calo$-submodule.
By Lemma \ref{lem:O-subm}$, \widetilde{\overline{M}}=\{0\}$. By the induction
assumption, $\overline{M}$ is a finitely generated $\calo$-submodule of
$\overline{V}$. Then we have a short exact sequence of $\calo$-modules:
$$0 \rightarrow M \cap Az \rightarrow M \rightarrow \overline{M} \rightarrow 0.$$
Because of $Az$ is 1-dimensional, $M \cap Az$ is an $\calo$-submodule
of $Az$, and $\widetilde{M \cap Az}=\{0\}$, then $M \cap Az$ is generated by
one element. So $M$ can be  generated by  finitely many elements as an $ \calo$-module.
\end{proof}

\begin{coro}\label{cor:fg}
Let $V$ be a finite dimensional $A$-vector space.
If $ M$ is an $ \calo$-submodule of $V$ such that $ \widetilde{M}=\{0\}$, then $M$ is a free $\calo$-module of rank $\dim_A AM$.
\end{coro}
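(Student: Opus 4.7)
The plan is to deduce this corollary directly from Theorem~\ref{finite-generation} together with the structure theorem for finitely generated modules over a PID. Concretely, Theorem~\ref{finite-generation} already furnishes that $M$ is a finitely generated $\calo$-submodule of $V$ under the hypothesis $\widetilde{M}=\{0\}$, so the only work left is to upgrade ``finitely generated'' to ``free of the right rank.''

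First I would observe that $V$ is an $A$-vector space, so in particular it is torsion-free as an $\calo$-module (any nonzero element of $\calo$ acts invertibly on $V$ via the inclusion $\calo \hookrightarrow A$). Hence the submodule $M$ is also $\calo$-torsion-free. Since $\calo = \bfk[[t]]$ is a discrete valuation ring and therefore a principal ideal domain, the standard structure theorem for finitely generated modules over a PID gives $M \cong \calo^r$ for some integer $r \geq 0$.

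Next I would identify $r$ with $\dim_A AM$. Because $M$ is $\calo$-torsion-free and $A$ is the field of fractions of $\calo$, the natural $A$-linear map
\[
M \otimes_\calo A \longrightarrow V, \qquad x \otimes a \longmapsto a x,
\]
is injective, and its image is exactly the $A$-span $AM \subseteq V$. Tensoring the isomorphism $M \cong \calo^r$ with $A$ over $\calo$ then yields $AM \cong A^r$ as $A$-vector spaces, so $\dim_A AM = r$, and $M$ is free of rank $\dim_A AM$ as claimed.

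There is no substantive obstacle: once Theorem~\ref{finite-generation} is in place, the corollary is essentially immediate from the elementary structure theory of modules over a DVR. The only points that require a moment of care are the torsion-freeness of $M$ (which follows from $M \subseteq V$ being an $A$-module) and the identification of $M \otimes_\calo A$ with $AM$ (which uses exactly that torsion-freeness).
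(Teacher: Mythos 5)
Your proof is correct and takes essentially the same route as the paper's: invoke Theorem~\ref{finite-generation} for finite generation and then the DVR/PID structure theory for freeness. You simply spell out the torsion-freeness of $M\subseteq V$ and the rank identification via $M\otimes_\calo A\cong AM$, both of which the paper leaves implicit in its one-line proof.
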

\begin{proof} $ \calo$ is a DVR, a finitely generated module is free if and only if it is torsion free. 
\end{proof}

\begin{coro}\label{cor:freed}
Let $V$ be a finite dimensional $A$-vector space.
If $M$ is an $\calo$-submodule of $V$, then there is an $A$-linear  decomposition $V=\widetilde{M}\oplus V_0$ such that
$M=\widetilde{M}\oplus M_{f}$ with $M_f=M\cap V_0$ being a free
$\calo$-module of rank $ \dim_A AM-\dim_A \widetilde{M}$.
\end{coro}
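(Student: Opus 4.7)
The plan is to pass to the quotient $\overline{V} := V/\widetilde{M}$, which is an $A$-vector space since $\widetilde{M}$ is an $A$-subspace of $V$, and then invoke Corollary~\ref{cor:fg} in that quotient. Write $\pi : V \to \overline{V}$ for the projection and set $\overline{M} := \pi(M)$, so that $\overline{M}$ is an $\calo$-submodule of $\overline{V}$.

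The first step is to check that $\widetilde{\overline{M}} = 0$. If $\bar y \in \widetilde{\overline{M}}$ has a lift $y \in V$, then $a\bar y \in \overline{M}$ for every $a \in A$, so $ay - m_a \in \widetilde{M}$ for some $m_a \in M$; since $\widetilde{M} \subseteq M$, this forces $ay \in M$ for every $a \in A$, whence $y \in \widetilde{M}$ and $\bar y = 0$. Corollary~\ref{cor:fg} then says that $\overline{M}$ is a free $\calo$-module of rank $\dim_A A\overline{M}$. Because $\widetilde{M} \subseteq AM$, one has $A\overline{M} = \pi(AM) = AM/\widetilde{M}$, so this rank equals $\dim_A AM - \dim_A \widetilde{M}$, matching the asserted rank of $M_f$. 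Since $\overline{M}$ is free, hence $\calo$-projective, the short exact sequence $0 \to \widetilde{M} \to M \to \overline{M} \to 0$ of $\calo$-modules splits; fix a splitting $s : \overline{M} \to M$ and set $M_f := s(\overline{M})$, so that $M = \widetilde{M} \oplus M_f$ with $M_f$ free of the required rank.

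It remains to produce the $A$-linear decomposition of $V$. Take $V_0 := AM_f \oplus W$, where $W$ is any $A$-linear complement of $AM$ in $V$. The image under $\pi$ of an $\calo$-basis of $M_f$ is an $\calo$-basis of $\overline{M}$ (because $\pi \circ s = \mathrm{id}$), and the latter is $A$-independent in $A\overline{M}$; hence $\widetilde{M} \cap AM_f = 0$, and a dimension count gives $\widetilde{M} \oplus AM_f = AM$ and then $V = \widetilde{M} \oplus V_0$. The identity $M \cap V_0 = M_f$ follows by writing any element of $M \cap V_0$ in the decomposition $M = \widetilde{M} \oplus M_f$ and observing that its $\widetilde{M}$-component lies in $\widetilde{M} \cap V_0 = 0$. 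The only substantive step is the vanishing $\widetilde{\overline{M}} = 0$; note that iterating Lemma~\ref{lem:O-subm} along an $A$-basis of $\widetilde{M}$ does not work, since the lemma's hypothesis $\widetilde{M} = 0$ is violated after the first quotient, which is why the direct lifting argument above seems necessary.
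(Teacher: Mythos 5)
Your proof follows essentially the same route as the paper's: quotient by $\widetilde{M}$, check $\widetilde{\overline{M}}=0$ by noting any lift of a divisible element of $\overline{M}$ is itself divisible in $M$, invoke Corollary~\ref{cor:fg} to get $\overline{M}$ free, split the short exact sequence, and then choose an $A$-complement of $\widetilde{M}$ containing $M_f$. Your write-up is slightly more detailed than the paper's (it spells out the rank computation $A\overline{M}=AM/\widetilde{M}$ and explicitly verifies $\widetilde{M}\cap AM_f=0$ and $M\cap V_0=M_f$, steps the paper leaves implicit), but there is no substantive difference in approach.
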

\begin{proof} Define $\overline{V}=V/\widetilde{M}$ with the quotient map $ \pi: V \rightarrow \overline{V}$
which is an $ A$-linear map. Then $\overline{M}=M/\widetilde{M}$ is an
$ \calo$-submodule of $ \overline{V}$. We now show that $ \widetilde{\overline{M}}=\{0\}$.
Indeed, $\widetilde{\overline{M}}$ is an $A$-subspace of $ \overline{V}$ then
$ \pi^{-1}(\widetilde{\overline{M}})$ is an $ A$-subspace of $V$.
Since $\widetilde{M}\subseteq M$,
we have $ \pi^{-1}(\widetilde{\overline{M}})\subseteq M$.
Hence $ \pi^{-1}(\widetilde{\overline{M}})\subseteq \widetilde{M}$.
Thus $\widetilde{\overline{M}}=\{0\}$.
Now by Corollary \ref{cor:fg}, $\overline{M}$ is a free $ \calo$-module and the exact sequence
\[ 0\rightarrow \widetilde{M}\rightarrow M \rightarrow \overline{M}\rightarrow 0\]
of $ \calo$-modules splits. Let $ M_f$ be the image of a splitting map
(of $\calo$-modules). Thus we have $ M=\widetilde{M}\oplus M_f$ with
$ M_f$ being free of rank $\dim_A A\overline{M}=\dim_A A\overline{M}-\dim_A \tilde{M}$. Now decompose $V=\tilde{M}\oplus V_0$ with $ V_0$ be an $ A$-subspace of $V$ containing $M_f$. 
\end{proof}
Similar to the definition of $ \widetilde{M}$, for a $ \bfk[t^{-1}]$-submodule $ N$, we define $ \widetilde{N}=\{ x \in N\; |\; Ax \subseteq N\}$. Then $\widetilde{N}$ is an $A$-subspace of $V$. In general, $N/\widetilde{N}$ is not a finitely generated $\bfk[t^{-1}]$-submodule. However, the following theorem implies that when $(M, N)$ is in $\rsd(V)$, then $ N/\widetilde{N}$ is a finitely generated (and thus a free) $\bfk[t^{-1}]$-module.
\begin{theorem} \label{rsd-decomp}
Let $V$ be a finite dimensional $A$-vector space. If $ (M,N) \in \rsd(V)$, then there is a decomposition of
$ V=\widetilde{M}\oplus V_0\oplus \widetilde{N}$ as $ A$-vector space and
there is $(M_f, N_f)$  in $\rsd(V_0)$ such that $M=\widetilde{M}\oplus M_f$ as $ \calo$-module and
$ N=\widetilde{N}\oplus N_f$ as $ \bfk[t^{-1}]$-module, $M_f$ is a free $\calo$-module of finite rank and $ N_f$ is a free $\bfk[t^{-1}]$-module of finite rank and  $V_0=A M_f=AN_f$.
\end{theorem}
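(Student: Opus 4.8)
The plan is to combine Corollary~\ref{cor:freed} applied to $M$ with the analogous statement for $N$, and then reconcile the two $A$-space decompositions using the fact that $(M,N)$ is a regular-singular decomposition. First I would apply Corollary~\ref{cor:freed} to the $\calo$-submodule $M$: this gives an $A$-linear splitting $V=\widetilde M\oplus W$ with $M=\widetilde M\oplus M_f$, $M_f=M\cap W$ a free $\calo$-module of finite rank, and $AM_f=W$. Next I would prove the $\bfk[t^{-1}]$-analogue of the finite-generation results (Theorem~\ref{finite-generation}, Corollaries~\ref{cor:fg} and~\ref{cor:freed}) for the $\bfk[t^{-1}]$-submodule $N$; since $\bfk[t^{-1}]\cong \calo$ as rings via $t^{-1}\leftrightarrow t$ and $A$ is the field of fractions of $\bfk[t^{-1}]$ as well, all those proofs transport verbatim. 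This yields an $A$-linear splitting $V=\widetilde N\oplus W'$ with $N=\widetilde N\oplus N_f$, $N_f=N\cap W'$ a free $\bfk[t^{-1}]$-module of finite rank, and $AN_f=W'$.

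The substance is to show $\widetilde M\cap \widetilde N=\{0\}$, that $\widetilde M+\widetilde N$ is a direct sum inside $V$, and that one can choose a common complement. Since $\widetilde M\subseteq M$ and $\widetilde N\subseteq N$ and $M\cap N=\{0\}$ (from $V=M\oplus N$), we get $\widetilde M\cap\widetilde N=\{0\}$ immediately, so $\widetilde M\oplus\widetilde N$ is an honest direct sum of $A$-subspaces. Set $U:=\widetilde M\oplus\widetilde N$. I claim $M+N$ restricted modulo $U$ still exhausts $V$; indeed $M+N\supseteq M\oplus N=V$ already, so the real task is to locate a single $A$-subspace $V_0$ with $V=U\oplus V_0$ and $V_0$ compatible with \emph{both} $M$ and $N$. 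The clean way: pass to $\overline V:=V/U$ with quotient map $\pi$. Then $\overline M:=\pi(M)$ and $\overline N:=\pi(N)$ are $\calo$- resp.\ $\bfk[t^{-1}]$-submodules of $\overline V$ with $\overline M\oplus\overline N=\overline V$ as $\bfk$-spaces (because $M+U=M+\widetilde N$, and $(M+\widetilde N)\cap N=\widetilde N$ since $M\cap N=0$, so the $\bfk$-dimension count goes through), and moreover $\widetilde{\overline M}=\{0\}=\widetilde{\overline N}$ by the same argument as in Corollary~\ref{cor:freed} (the preimage of a divisible subspace would be an $A$-subspace inside $M$, forcing it into $\widetilde M\subseteq U$). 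Hence by the $\calo$- and $\bfk[t^{-1}]$-versions of Theorem~\ref{finite-generation}, $\overline M$ is a free $\calo$-module of finite rank and $\overline N$ is a free $\bfk[t^{-1}]$-module of finite rank, and $(\overline M,\overline N)\in\rsd(\overline V)$.

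It then remains to identify $\overline V$ with the desired $V_0$: choose an $A$-linear section $s:\overline V\to V$ of $\pi$, put $V_0:=s(\overline V)$, so $V=U\oplus V_0=\widetilde M\oplus V_0\oplus\widetilde N$. Set $M_f:=M\cap V_0$ and $N_f:=N\cap V_0$; I would check $\pi|_{V_0}$ carries $M_f$ isomorphically onto $\overline M$ and $N_f$ onto $\overline N$ — this uses $\widetilde M\subseteq M$ to see $M=\widetilde M\oplus (M\cap V_0)$ once $V=\widetilde M\oplus(V_0\oplus\widetilde N)$ and $\widetilde N\cap M=0$ — so that $M_f$ is free over $\calo$ and $N_f$ is free over $\bfk[t^{-1}]$ of the stated ranks, $M=\widetilde M\oplus M_f$ as $\calo$-modules, $N=\widetilde N\oplus N_f$ as $\bfk[t^{-1}]$-modules, and $AM_f=V_0=AN_f$. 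Finally $(M_f,N_f)\in\rsd(V_0)$ follows from $V_0=M_f\oplus N_f$ as $\bfk$-spaces (transported from $\overline V=\overline M\oplus\overline N$), with $M_f$ an $\calo$-submodule and $N_f$ a $\bfk[t^{-1}]$-submodule of $V_0$ by construction.

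The main obstacle I anticipate is the bookkeeping showing that after quotienting by $U=\widetilde M\oplus\widetilde N$ the pair $(\overline M,\overline N)$ is still a direct-sum decomposition of $\overline V$ as $\bfk$-spaces, and that a single $A$-linear complement $V_0$ simultaneously splits $M$ and $N$; the vanishing $\widetilde{\overline M}=\widetilde{\overline N}=\{0\}$ needs the same preimage-is-an-$A$-subspace trick used in Corollary~\ref{cor:freed}, applied on both sides at once. Everything else is either a direct appeal to Theorem~\ref{finite-generation} (and its transparent $\bfk[t^{-1}]$-mirror) or routine linear algebra over $A$.
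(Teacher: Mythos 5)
Your plan hinges on a step that is false: you assert that ``$\bfk[t^{-1}]\cong\calo$ as rings via $t^{-1}\leftrightarrow t$ and $A$ is the field of fractions of $\bfk[t^{-1}]$,'' and on that basis claim the finite-generation machinery (Theorem~\ref{finite-generation}, Corollaries~\ref{cor:fg} and~\ref{cor:freed}) ``transports verbatim'' to $\bfk[t^{-1}]$-submodules. Both parts of the assertion are wrong. The substitution $t^{-1}\leftrightarrow t$ sends $\bfk[t^{-1}]$ to the \emph{polynomial} ring $\bfk[t]$, not to the power series ring $\calo=\bfk[[t]]$; and the field of fractions of $\bfk[t^{-1}]$ is the rational function field $\bfk(t)$, not $A=\bfk((t))$. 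Unlike $\calo$, the ring $\bfk[t^{-1}]$ is neither local nor complete, and the proofs of Lemma~\ref{lem:O-subm} and Theorem~\ref{finite-generation} lean on completeness in an essential way (the ``$\cap_n t^nM=0$'' / convergent-series argument). In fact the paper warns against exactly this move just before stating the theorem: ``In general, $N/\widetilde N$ is \emph{not} a finitely generated $\bfk[t^{-1}]$-submodule.'' A concrete counterexample: pick $\alpha\in A$ transcendental over $\bfk(t)$ and take $N=\bigoplus_{i\geq 1}\bfk[t^{-1}]\alpha^i\subseteq A$; one checks $\widetilde N=0$, yet $N$ is not finitely generated over $\bfk[t^{-1}]$. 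So there simply is no mirror Theorem~\ref{finite-generation} available, and the finiteness of $N/\widetilde N$ can only come from the interaction with $M$ inside an RSD.

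There is a secondary gap in your divisibility-killing step. You write that in $\overline V=V/U$ with $U=\widetilde M\oplus\widetilde N$, the vanishing $\widetilde{\overline M}=0$ follows ``by the same argument as in Corollary~\ref{cor:freed}.'' But that argument works because (there) $\pi^{-1}(\overline M)=M$, so an $A$-subspace lying inside $\overline M$ pulls back to an $A$-subspace inside $M$, hence inside $\widetilde M$. In your quotient $\pi^{-1}(\overline M)=M+U=M+\widetilde N$, so the preimage lands in $M+\widetilde N$, not in $M$, and the ``largest $A$-subspace inside $M$'' argument no longer closes. The paper's proof sidesteps both issues by a deliberately asymmetric reduction: it first passes to $V/AM$, observes that a lift of a basis of $V/AM$ inside $N$ automatically spans an $A$-subspace (this is where the RSD hypothesis is used, not a mirror finite-generation theorem), identifies that $A$-span with $\widetilde N$, and only then applies Corollary~\ref{cor:freed} \emph{to} $M$ inside $AM$. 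You need some version of that interplay; a symmetric ``do the same thing on the $N$ side'' approach does not work.
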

\begin{proof} Consider $V_1=AM$, which is an $A$-subspace of $V$.
Let $ \sigma: V\rightarrow V/V_1$ be the $A$-linear quotient map. Then
$\sigma(N)$ is a $\bfk[t^{-1}]$-submodule of $V/V_1$. Since $ V=M\oplus N$
as a $\bfk$-vector space. Hence, $\sigma(N)=V/V_1$ as $\bfk$-vector spaces. Let $ N_1=N\cap V_1$. Then we have an exact sequence
\[ 0 \rightarrow N_1 \rightarrow N \rightarrow \sigma(N)\rightarrow 0.\]
We cannot claim that this sequence of $ \bfk[t^{-1}]$-modules splits. However,
since $ \sigma $ is $A$-linear, we can choose $\{v_1,\cdots, v_s\}\subseteq N$ such
that $ \{\sigma(v_1), \cdots, \sigma(v_s)\}$ is an $A$-basis of $V/V_1$. We claim that
$ A\{v_1, \cdots, v_{s}\} \subseteq N$. In fact under the $\bfk$-linear
projection map, $p_M: V \rightarrow M$, we have $p_M(A\{v_1, \cdots, v_{s}\} )=0$
since $ V_1\cap A\{v_1, \cdots, v_{s}\}=0$.  We now claim that
$ N=N_1\oplus A\{v_1, \cdots, v_{s}\}$.
It is straightforward to verify that $(M, N_1)$ is in $\rsd(V_1)$.

We know that $ \widetilde{M}\subseteq V_1 $ is an $ A$-subspace of $V_1$. The above corollary \ref{cor:freed} implies that $V_1=\widetilde{M}\oplus V_0$ for an $A$-subspace of $V_1$  such that  $M_0=V_0\cap M$ is an $\calo$-free module of finite rank and $ M=\widetilde{M}\oplus M_0$ as $\calo$-module. It follows from the proof of the corollary \ref{cor:freed}, one can choose $V_0$ such that $ N_1 \subseteq V_0$ and further more, $ V_0=A N_1$. Then $ (M_0, N_1)$ is a regular-singular decomposition for the $A$-vector space $V_0$.
\end{proof}
%\begin{theorem}\label{rb-decomp}
%Every finite dimensional $RB$-module over $A$ is isomorphic to a module of the form $(V_1, p_1) \oplus (V_r, p_r) \oplus (V_0, p_0)$, where $p_1=\on{Id}_{V_1}$,  $ p_0=0_{V_0}$, and $(V_r, p_r)$ regular.
%\end{theorem}
%\begin{proof} If $(V, P)$ is a finite dimensional $(A,p)$-module, then for $ M=p(V)$ and $ N=\ker(p)$, we get an RSD of $ V$. Now that theorem follows from Theorem~\ref{rsd-decomp}.
%\end{proof}

\section{Classification of finite dimensional regular $RB$-modules}
\label{sec:classify}
\subsection{} In this section, we assume that $ (M,N)$ is in $\rsd(V)$ satisfies the condition that
$M$ is finitely generated $ \calo$-module and $ V=AM$. Let $\rsd_r(V)$ be the set of all such $\rsd$s of $V$.  Under this assumption, $M$ contains an $A$-basis. Since $ M$ is free of finite rank $n=\dim_AV$ over $\calo$ (i.e., $M$ is an $\calo$-lattice in $V$), then there exist $v_{1}, v_{2}, \cdots, v_{n}\in M$ such that $\displaystyle M=\oplus_{i=1}^{n}\calo v_{i}$, and $(v_{1}, v_{2}, \cdots, v_{n})$ is an $A$-basis. Thus $M\cong \calo^n$. From now on, we use this basis  and identify $V=A^n$ and $ \calo^n\subseteq A^n$.

For any lattice $M$ in $V=A^n$, there is $\phi \in GL_n(A)$ such that $ \phi(\calo^n)=M$.  Note that,  for $ \phi \in GL_A(V)$,  $ \phi(\calo^n)=\calo^n$ implies that $\phi \in GL_n(\calo)$.  Thus the set of all lattices in $V$ is in one-to-one correspondence to the affine Grassmannian $ GL_n(A)/GL_n(\calo)$. Thus for each fixed dimension $n$, the map $\Phi: \on{\rsd}_r(V)\rightarrow GL_n(A)/GL_n(\calo) $, defined by $ (M, N)\mapsto \phi GL_n(\calo)$ such that $M=\phi(\calo^n)$, is a fiber bundle and $ GL_n(A)$-equivariant in the sense that $\Phi(\phi(M, N))=\phi\Phi(M,N)$ for all $ \phi\in GL_n(A)$. Our main goal in this section is to compute the fiber of this map.

Let $ \{v_1, \cdots, v_n\}$ be the standard $A$-basis of $V=A^n$. Then $ \calo^n=\oplus_{i=1}^{n}\calo v_i$ and $V/\calo^n$ has a $ \bfk$-basis
$ \{ t^{-j}v_i ~|~ j\geq 1, i=1, 2, \cdots, n\}$. The
$ \bfk$-vector space isomorphism $ N \cong V/\calo^n$ induced via restriction of the
quotient map  $V\rightarrow V/\calo^n$ to $N$ defines a $\bfk$-basis
$\{b_{ji} ~|~ j \geq 1, i=1, 2, \cdots, n\}$ of $N$ such that $b_{ji}=t^{-j}v_i+r_{ji}(t)$ with $ r_{ji}(t) \in \calo^n$.

\begin{lemma} \label{special-basis}
For any $m \in\mathbb{Z}_+$ and $ 1\leq \ell\leq n$, we have
$\displaystyle b_{m\ell}\in \sum_{i=1}^{n}\bfk[t^{-1}]b_{1i}$.
\end{lemma}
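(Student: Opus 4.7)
The plan is to mimic the strategy of Lemma 2.8 (the rank-one analogue) almost verbatim, carrying along the extra index $i=1,\dots,n$ that now appears because $N$ has an $n$-indexed basis in each ``degree.'' I would fix $\ell$ and induct on $m$. The base case $m=1$ is immediate since $b_{1\ell}\in\bfk[t^{-1}]b_{1\ell}\subseteq\sum_{i=1}^n\bfk[t^{-1}]b_{1i}$.

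For the inductive step, assume $b_{m\ell}\in\sum_{i=1}^n\bfk[t^{-1}]b_{1i}$. Using the explicit form $b_{j\ell}=t^{-j}v_\ell+r_{j\ell}(t)$ with $r_{j\ell}(t)\in\calo^n$, I would compute
\[
b_{m+1,\ell}-t^{-1}b_{m\ell}=r_{m+1,\ell}(t)-t^{-1}r_{m\ell}(t).
\]
Writing $r_{m\ell}(t)=r_{m\ell}(0)+t\,s_{m\ell}(t)$ with $s_{m\ell}(t)\in\calo^n$ and expanding the constant term $r_{m\ell}(0)=\sum_{i=1}^n c_i v_i\in\bfk^n$, each $t^{-1}v_i$ can be replaced using $t^{-1}v_i=b_{1i}-r_{1i}(t)$. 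Collecting the $\calo^n$-valued remainder, this produces
\[
b_{m+1,\ell}-t^{-1}b_{m\ell}+\sum_{i=1}^n c_i b_{1i}=r_{m+1,\ell}(t)-s_{m\ell}(t)-\sum_{i=1}^n c_i r_{1i}(t)\in\calo^n.
\]

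The decisive step, exactly as in the rank-one lemma, is to observe that the left-hand side lies in $N$, while the right-hand side lies in $\calo^n$, and $V=\calo^n\oplus N$ forces both sides to vanish. This yields the recurrence
\[
b_{m+1,\ell}=t^{-1}b_{m\ell}-\sum_{i=1}^n c_i b_{1i},
\]
and together with the induction hypothesis (and the obvious fact that $\bfk[t^{-1}]$ is closed under multiplication by $t^{-1}$) this completes the induction.

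The main obstacle is bookkeeping rather than conceptual: one must handle the ``constant coefficient'' $r_{m\ell}(0)$ of $r_{m\ell}(t)$ as an element of $\bfk^n$ and rewrite each $t^{-1}v_i$ in terms of $b_{1i}$ modulo $\calo^n$. Once this is set up, the whole argument reduces to the same intersection-trivial principle $N\cap\calo^n=\{0\}$ that powered Lemma~2.8.
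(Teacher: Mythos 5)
Your proposal is correct and follows essentially the same path as the paper: fix $\ell$, induct on $m$, substitute $t^{-1}v_i = b_{1i}-r_{1i}(t)$ for the constant-term contribution, and invoke $N\cap\calo^n=\{0\}$ via the direct sum $V=\calo^n\oplus N$. (One cosmetic slip: the right side of your second display should read $r_{m+1,\ell}(t)+\sum_i c_i r_{1i}(t)-s_{m\ell}(t)$, but this is still in $\calo^n$, so the argument is unaffected.)
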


\begin{proof} We  use induction on $m$. If $m=1$, it's clear. For $m\geq 2$, assume that $\displaystyle b_{m\ell}\in \sum_{i=1}^n \bfk[t^{-1}]b_{1i}$. Then
\begin{eqnarray*}
b_{(m+1)\ell}&=&t^{-1}b_{m\ell}-t^{-1}r_{m\ell}(t)+r_{(m+1)\ell}(t)\\
&=&t^{-1}b_{m\ell}-t^{-1}r_{m\ell}(0)-t^{-1}(r_{m\ell}(t)-r_{m\ell}(0))+r_{(m+1)\ell}(t).\end{eqnarray*}
Note that $t^{-1}(r_{m\ell}(t)-r_{m\ell}(0))\in \calo^n$.  If we write $\displaystyle r_{m\ell}(0)=\sum_{i=1}^n a_i v_i$ with $a_i \in \bfk$, we have $\displaystyle t^{-1}r_{m\ell}(0)=\sum_{j=1}^n a_ib_{1i}-\sum_{i=1}^n a_i r_{1i}(t)$.
Thus we have $\displaystyle b_{(m+1)\ell} \in \sum_{i =1}^n \bfk[t^{-1}]b_{1i}+\calo^n$.

Write $b_{(m+1)\ell}=u+v$ with $\displaystyle u\in  \sum_{i=1}^n \bfk[t^{-1}]b_{1i} $ and $ v \in \calo^n$.
Since  $\displaystyle \sum_{i=1}^n \bfk[t^{-1}]b_{1i}\subseteq N$ and $ b_{(m+1)\ell}\in N$, thus $v =b_{(m+1)\ell}-u \in N$. Therefore $ v=0$ since $ N\cap \calo^n=\{0\}$. Hence $\displaystyle b_{(m+1)\ell}\in \sum_{i=1}^n \bfk[t^{-1}]b_{1i} $.
 \end{proof}

\begin{prop} \label{o-decomp}
If $(\calo^{n}, N)\in \rsd(V)$, then $N$ is a free $ \bfk[t^{-1}]$-module
of rank $n$ with a basis $\{ b_i~|~ i=1, 2, \cdots, n\}$ where $ b_i \in t^{-1}v_i+\calo^n$.
In particular, there is a $ \phi \in GL_n(\calo)$ such that $ \phi(t^{-1}v_i)=b_i$ for $ i=1, 2, \cdots, n$.  \end{prop}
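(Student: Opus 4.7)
The plan is to deduce the proposition directly from Lemma~\ref{special-basis} by first proving that the elements $b_i := b_{1i}$, $i = 1, \ldots, n$, are $\bfk[t^{-1}]$-linearly independent, and then by writing down the required $\phi$ explicitly as a matrix of the form $I_n + tR(t)$.

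For the spanning property, Lemma~\ref{special-basis} already tells me that every basis element $b_{m\ell}$ lies in $\sum_{i=1}^n \bfk[t^{-1}] b_{1i}$. Since $\{b_{m\ell} \mid m \geq 1,\, 1 \leq \ell \leq n\}$ is a $\bfk$-basis of $N$, this forces $N = \sum_{i=1}^n \bfk[t^{-1}] b_{1i}$. For linear independence, I plan to argue by leading order in $t^{-1}$: assume $\sum_i f_i(t^{-1}) b_i = 0$ with not all $f_i \in \bfk[t^{-1}]$ zero, let $d \geq 0$ be the largest exponent of $t^{-1}$ occurring with a non-zero coefficient, and write $f_i(t^{-1}) = \sum_j c_{i,j} t^{-j}$. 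Expanding along the $A$-basis $\{v_1, \ldots, v_n\}$ and using that $r_{1i}(t) \in \calo^n$ has only non-negative powers of $t$, the product $f_i(t^{-1}) r_{1i}(t)$ contains no term of $t$-order below $-d$. Hence the coefficient of $t^{-d-1}$ in $\sum_i f_i(t^{-1}) b_i$, expanded in $\{v_i\}$, is exactly $\sum_i c_{i,d} v_i$, coming solely from the terms $c_{i,d} t^{-d}\cdot t^{-1} v_i$. Setting this to zero forces every $c_{i,d} = 0$, contradicting the choice of $d$.

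For the construction of $\phi$, I define the $A$-linear endomorphism $\phi \colon V \to V$ by $\phi(v_i) := v_i + t\, r_{1i}(t)$ for $i = 1, \ldots, n$, extended $A$-linearly. Its matrix with respect to $\{v_1, \ldots, v_n\}$ has the form $I_n + tR(t)$ with $R(t)$ a matrix over $\calo$, and its determinant is congruent to $1$ modulo $t$, hence a unit in $\calo$; therefore $\phi \in GL_n(\calo)$. A direct calculation yields $\phi(t^{-1} v_i) = t^{-1} \phi(v_i) = t^{-1} v_i + r_{1i}(t) = b_i$, as required.

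The only real obstacle I anticipate is organizing the linear-independence step, where contributions of negative powers of $t^{-1}$ from $f_i(t^{-1})$ interact with the positive-power tails $r_{1i}(t)$; tracking the argument by the minimal $t$-order of the expansion, as above, keeps this completely clean.
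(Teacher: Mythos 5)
Correct, and essentially the same argument as the paper: spanning is deduced from Lemma~\ref{special-basis}, and the change-of-basis map $\phi(v_i) = v_i + t\,r_{1i}(t)$ is the same explicit $\phi \in GL_n(\calo)$ the paper writes down. You additionally give an explicit leading-coefficient proof that the $b_i$ are $\bfk[t^{-1}]$-linearly independent, which the paper leaves implicit (it also follows, after the fact, from $\phi$ being an $A$-linear automorphism carrying the standard $\bfk[t^{-1}]$-basis $\{t^{-1}v_i\}$ of $t^{-1}\bfk[t^{-1}]^n$ onto $\{b_i\}$ with $N=\phi(t^{-1}\bfk[t^{-1}]^n)$), and your determinant argument cleanly justifies $\phi\in GL_n(\calo)$.
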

\begin{proof}
By Lemma~\ref{special-basis},  we have
$\displaystyle N\subseteq \sum_{i=1}^{n} \bfk[t^{-1}]b_{1i} \subseteq N$.
Then $ \{b_{1i}~|~ i=1, \cdots, n\}$ is a $\bfk[t^{-1}]$-basis of $N$.
Write $b_{i}=b_{1i}$. The $A$-linear map $ \phi: A^n\rightarrow A^n$
defined by $ \phi(v_i)=v_i+tr_{1i}(t)$ is invertible and $\phi \in GL_n(\calo)$
since $ \phi(\calo^n)=\calo^n$, and $\displaystyle N=\phi( \sum_{i=1}^n \bfk[t^{-1}]t^{-1}v_i)$.
\end{proof}
\begin{coro} \label{o-orbit} The set of all $ \bfk[t^{-1}]$-submodules $N$ of $ A^n$ such that $ (M, N)\in \rsd(A^n)$ is the  $ GL_n(\calo)$-orbit of $ t^{-1} \bfk[t^{-1}]^n$ which is in bijection to $GL_n(\calo)/GL_{n}(\bfk)$.
\end{coro}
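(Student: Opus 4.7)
The plan is to deduce this corollary from Proposition~\ref{o-decomp} together with a direct stabilizer computation, writing $N_0 := t^{-1}\bfk[t^{-1}]^n$. Proposition~\ref{o-decomp} already does the hard work: it produces, for every $N$ with $(\calo^n, N)\in\rsd(A^n)$, an element $\phi\in GL_n(\calo)$ such that $N=\phi(N_0)$. So the $GL_n(\calo)$-orbit of $N_0$ surjects onto the set of admissible $N$.

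For the reverse inclusion I would simply observe that if $\phi\in GL_n(\calo)$ then $\phi(\calo^n)=\calo^n$, hence
\[
\calo^n\oplus\phi(N_0)=\phi(\calo^n\oplus N_0)=\phi(A^n)=A^n,
\]
so $(\calo^n,\phi(N_0))\in\rsd(A^n)$. Combined with the previous paragraph, this yields the first assertion.

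For the bijection with $GL_n(\calo)/GL_n(\bfk)$, the main (and only substantive) step is to identify the stabilizer $H:=\mathrm{Stab}_{GL_n(\calo)}(N_0)$. Given $\phi\in GL_n(\calo)$ with matrix $(a_{ij}(t))$, $a_{ij}(t)\in\calo$, I would decompose
\[
\phi(t^{-1}v_j)=t^{-1}\sum_{i=1}^n a_{ij}(0)\,v_i + \sum_{i=1}^n \frac{a_{ij}(t)-a_{ij}(0)}{t}\,v_i,
\]
noting that the first summand lies in $N_0$ while the second lies in $\calo^n$. The condition $\phi(N_0)\subseteq N_0$ forces the $\calo^n$-part to lie in $N_0\cap\calo^n=\{0\}$, so $a_{ij}(t)=a_{ij}(0)\in\bfk$ for all $i,j$, and hence $\phi\in GL_n(\bfk)$. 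Conversely $GL_n(\bfk)$ visibly preserves $N_0$, so $H=GL_n(\bfk)$. The orbit-stabilizer bijection then identifies the $GL_n(\calo)$-orbit of $N_0$ with $GL_n(\calo)/GL_n(\bfk)$.

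I expect the only mild obstacle to be keeping the two $\bfk$-decompositions $A^n=\calo^n\oplus N_0$ and $A^n=\calo^n\oplus\phi(N_0)$ straight; the rest reduces to the observation that the reduction-mod-$t$ homomorphism $GL_n(\calo)\twoheadrightarrow GL_n(\bfk)$ admits $GL_n(\bfk)$ as a section, and the stabilizer computation is precisely identifying this section as the stabilizer of $N_0$.
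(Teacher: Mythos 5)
Your argument is correct and follows essentially the same route as the paper: surjectivity onto the set of admissible $N$ comes from Proposition~\ref{o-decomp}, the reverse inclusion is the observation that $GL_n(\calo)$ preserves $\calo^n$ and hence preserves the RSD property, and the bijection with $GL_n(\calo)/GL_n(\bfk)$ is the orbit--stabilizer theorem once the stabilizer of $N_0=t^{-1}\bfk[t^{-1}]^n$ in $GL_n(\calo)$ is identified with $GL_n(\bfk)$. The only difference is cosmetic: you compute this stabilizer by an explicit Taylor-expansion of the matrix entries, whereas the paper argues more abstractly that $A$-linearity turns $\psi(N_0)=N_0$ into $\psi(\bfk[t^{-1}]^n)=\bfk[t^{-1}]^n$ (multiply by $t$), so the stabilizer is $GL_n(\calo)\cap GL_n(\bfk[t^{-1}])=GL_n(\bfk)$; both are one-line verifications and equivalent in content.
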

\begin{proof} Clearly for any $\phi\in GL_n(\calo)$, $(\calo^n, \phi(t^{-1}\bfk[t^{-1}]^{n}))$ is also in $\rsd(A^n)$. For any $(\calo^n, N)$, there is $\psi \in GL_n(A)$ such that $ \psi(\calo^n,t^{-1}\bfk[t^{-1}]^{n})=(\calo^n, N)$, we have $\psi(\calo^n)=\calo^n$. Hence $ \psi \in GL_n(\calo)$.  In futher $ \psi(t^{-1}\bfk[t^{-1}]^n) =t^{-1}\bfk[t^{-1}]^n$, then $ \psi \in GL_n(\bfk[t^{-1}])$. Thus $ \psi \in GL_n(\calo)\cap GL_{n}(\bfk[t^{-1}])=GL_n(\bfk)$.
\end{proof}

\begin{theorem} If $ (V, p)$ is a regular $RB$-module for $ (A, P)$ of dimension $n$,
then $ (V, p)\cong (A, P)^{\oplus n}$.
\end{theorem}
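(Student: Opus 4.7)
The plan is to translate the statement into regular-singular decompositions via Theorem~\ref{theorem p} and then assemble the pieces already proved in Proposition~\ref{o-decomp} and Corollary~\ref{o-orbit}. Let $(M,N) = \Psi(p) = (\im(p),\ker(p))$ be the regular-singular decomposition associated to $(V,p)$. The hypothesis that $(V,p)$ is a regular $RB$-module is, by definition, exactly the condition that $(M,N)$ is a regular RSD: $\widetilde{M} = \{0\}$ and $AM = V$.

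Next I would reduce to the case $V=A^{n}$ with $M=\calo^{n}$. Since $\widetilde{M}=\{0\}$ and $\dim_A AM = n$, Corollary~\ref{cor:fg} gives that $M$ is a free $\calo$-module of rank $n$. Pick an $\calo$-basis $\{v_1,\dots,v_n\}$ of $M$; because $AM=V$, this is also an $A$-basis of $V$. The induced $A$-linear isomorphism $V\xrightarrow{\sim} A^{n}$ is in particular an isomorphism of $RB$-modules from $(V,p)$ to $(A^{n},p')$ for some Rota-Baxter operator $p'$ satisfying $\im(p')=\calo^{n}$ and $\ker(p')=N'$ for a certain $\bfk[t^{-1}]$-submodule $N'$. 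So without loss of generality we can work with the RSD $(\calo^{n},N')$ in $\rsd(A^{n})$.

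Now I invoke Corollary~\ref{o-orbit}: any $\bfk[t^{-1}]$-submodule $N'$ with $(\calo^{n},N')\in \rsd(A^{n})$ lies in the $GL_{n}(\calo)$-orbit of $t^{-1}\bfk[t^{-1}]^{n}$. So there exists $\phi\in GL_{n}(\calo)$ with $\phi(\calo^{n})=\calo^{n}$ and $\phi\bigl(t^{-1}\bfk[t^{-1}]^{n}\bigr)=N'$. Since $\phi$ is $A$-linear, it gives an $A$-module isomorphism $A^{n}\to A^{n}$ carrying the RSD $(\calo^{n},\,t^{-1}\bfk[t^{-1}]^{n})$ to $(\calo^{n},N')$, which by the $GL_{A}(V)$-equivariance of $\Psi$ in Theorem~\ref{theorem p} is the same as an $RB$-isomorphism. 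Finally I would observe that $(\calo^{n},\,t^{-1}\bfk[t^{-1}]^{n})$ is precisely the RSD of the $n$-fold direct sum $(A,P)^{\oplus n}$, since the Rota-Baxter operator $P^{\oplus n}$ on $A^{n}$ has image $\calo^{n}$ and kernel $t^{-1}\bfk[t^{-1}]^{n}$. Composing the two isomorphisms yields $(V,p)\cong (A,P)^{\oplus n}$.

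There is essentially no hard step left: all the real work was in Theorem~\ref{finite-generation} (finite generation from $\widetilde{M}=0$) and Corollary~\ref{o-orbit} (transitivity of $GL_{n}(\calo)$ on the admissible complements to $\calo^{n}$). The only point requiring minor care is the bookkeeping that the $A$-linear isomorphism of underlying vector spaces we construct does intertwine the Rota-Baxter operators, which is automatic from Theorem~\ref{theorem p} because an $RB$-module on $V$ is determined by its RSD.
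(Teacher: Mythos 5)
Your proposal is correct and follows essentially the same route as the paper's own proof: reduce via Corollary~\ref{cor:fg} to the case $M=\calo^{n}$ with an $\calo$-basis that is also an $A$-basis, then use Proposition~\ref{o-decomp}/Corollary~\ref{o-orbit} to produce a $\phi\in GL_n(\calo)$ carrying $(\calo^n, t^{-1}\bfk[t^{-1}]^n)$ to $(\calo^n, N)$, and conclude via the $GL_A(V)$-equivariant bijection $\Psi$ from Theorem~\ref{theorem p}. The only cosmetic difference is that you cite Corollary~\ref{o-orbit} where the paper invokes Proposition~\ref{o-decomp} directly, but the former is an immediate repackaging of the latter, so the argument is the same.
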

\begin{proof} Since $(V, p)$ is regular, we can find an $A$-basis $\{ v_1, \cdots, v_n\}$ such that $\displaystyle p(V)=\sum_{i=1}^n \calo v_i$. By Proposition~\ref{o-decomp}, there is $\phi \in GL_A(V)$ such that $\displaystyle \phi(\sum_{i=1}^n\calo v_i)=\sum_{i=1}^n\calo v_i $ and $(\phi(t^{-1}v_1), \cdots, \phi(t^{-1}v_n)$ is a $\bfk[t^{-1}]$-basis of $\ker (p)$. Thus
$ \phi: (A, P)^{\oplus n}\rightarrow (V, p) $ defines an isomorphism of $RB$-modules over $ (A, P)$.
\end{proof}

\begin{theorem} \label{thm:classification}
Every $RB$-module over $(A, P)$ is semisimple. The category of $RB$-modules has exactly three isomorphism classes of  irreducible $(A, P)$-modules. They  are $A_r=(A, P)$, $A_0=(A, 0)$, and $A_1=(A, Id)$.
\end{theorem}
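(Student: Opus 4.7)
The plan is to assemble everything proved so far into a single decomposition theorem on an arbitrary finite dimensional $(A,P)$-module $(V,p)$. By Theorem~\ref{theorem p}, such a module corresponds bijectively to an RSD $(M,N)=(\im(p),\ker(p))$ of $V$, so I can work entirely on the RSD side and translate back at the end.

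First I would apply the structure result for RSDs, Theorem~\ref{rsd-decomp}, to write
\[ V=\widetilde{M}\oplus V_0\oplus \widetilde{N}, \qquad M=\widetilde{M}\oplus M_f,\qquad N=\widetilde{N}\oplus N_f,\]
with $(M_f,N_f)\in \rsd(V_0)$ regular. Each summand is an $A$-subspace of $V$ stable under $p$: on $\widetilde{M}\subseteq M=\im(p)$ the idempotent $p$ acts as the identity, on $\widetilde{N}\subseteq N=\ker(p)$ it acts as $0$, and on $V_0$ it is the projection onto $M_f$ along $N_f$. Under the correspondence of Theorem~\ref{theorem p}, these three summands are $(A,\mathrm{Id})$-modules, $(A,0)$-modules, and a regular $(A,P)$-module, respectively, and the decomposition is a direct sum decomposition of $(A,P)$-modules.

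Next I would invoke the classification of regular modules in the preceding theorem: the regular summand $(V_0,p|_{V_0})$ is isomorphic to $(A,P)^{\oplus r}$ with $r=\dim_A V_0$. Combined with the previous step, this yields
\[ (V,p)\cong A_1^{\oplus k}\oplus A_r^{\oplus r}\oplus A_0^{\oplus \ell},\]
where $k=\dim_A\widetilde{M}$, $r=\dim_A V_0$, $\ell=\dim_A\widetilde{N}$. So every finite dimensional $(A,P)$-module is a direct sum of copies of the three listed objects.

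It remains to verify that $A_0,A_1,A_r$ are indeed pairwise non-isomorphic irreducible objects. Irreducibility is immediate: any $(A,P)$-submodule is in particular an $A$-subspace of $A$, hence is $0$ or $A$. Non-isomorphism is also easy: an $(A,P)$-isomorphism $f$ must intertwine the operators, so $f(p(1))=p'(f(1))$; with $f(1)=c\in A^\times$ this forces $p$ and $p'$ to agree up to the unit $c$, which separates $0$, $\mathrm{Id}$, and $P$ since $P(1)=1\neq 0$ and $P$ is neither $0$ nor $\mathrm{Id}$. Finally, a direct sum decomposition into simples in any abelian category implies semisimplicity of the category generated by such objects, so the category of finite dimensional $(A,P)$-modules is semisimple. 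The only real content in all of this is already done in Theorem~\ref{rsd-decomp} (the regular/divisible/trivial splitting) and the classification of regular modules; the present theorem is essentially a repackaging of those results.
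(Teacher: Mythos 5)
Your proposal is correct and takes essentially the same route the paper intends: the paper states the theorem without an explicit proof precisely because it is the assembly of Theorem~\ref{rsd-decomp} (the $\widetilde{M}\oplus V_0\oplus\widetilde{N}$ splitting), the regular classification $(V_0,p|_{V_0})\cong (A,P)^{\oplus r}$, and the easy observations about irreducibility and pairwise non-isomorphism of the one-dimensional objects.
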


The following corollary has other geometric applications. The group $ GL_n(A)$ acts on the set $\rsd(A^{\oplus n})$ of all $\rsd$s of $V$. This set carries of structure of infinite dimensional variety, which we will not pursue here.
\begin{coro} \label{coro:classification} The set $RB(A^{\oplus n})$ has exactly $(n+2)(n+1)/2$ many $GL_n(A)$-orbits, which are classified by $ (k, r, l)\in \mathbb{Z}^{3}_{\geq 0}$ with $k+r+l=n$.
\end{coro}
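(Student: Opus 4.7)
The plan is to deduce the counting corollary directly from the semisimplicity statement of Theorem~\ref{thm:classification} together with the equivariant bijection $\Psi\colon RB(V)\to \rsd(V)$ established in Theorem~\ref{theorem p}. The crucial observation is that $GL_n(A)$-orbits on $RB(A^{\oplus n})$ (under the conjugation action $\phi\cdot p = \phi\circ p\circ \phi^{-1}$) are in bijection with isomorphism classes of RB-module structures on the fixed $A$-module $A^{\oplus n}$: two operators $p,p'\in RB(A^{\oplus n})$ lie in the same orbit precisely when there exists $\phi\in GL_A(A^{\oplus n})$ with $p' = \phi\circ p\circ\phi^{-1}$, which is exactly an RB-isomorphism $(A^{\oplus n},p)\cong (A^{\oplus n},p')$.

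First I would invoke Theorem~\ref{thm:classification}: every finite dimensional $RB$-module over $(A,P)$ is semisimple, and the three irreducibles $A_0=(A,0)$, $A_r=(A,P)$, and $A_1=(A,\mathrm{Id})$ are pairwise nonisomorphic and each one-dimensional over $A$. Hence any RB-structure $p$ on $A^{\oplus n}$ satisfies
\[
 (A^{\oplus n},p) \;\cong\; A_0^{\oplus k}\oplus A_r^{\oplus r}\oplus A_1^{\oplus l}
\]
for a unique triple $(k,r,l)\in \mathbb{Z}^3_{\geq 0}$, and the constraint $\dim_A(A^{\oplus n})=n$ forces $k+r+l=n$. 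Uniqueness of the triple follows from Krull–Schmidt in the semisimple abelian category, or more directly from the fact that the three irreducibles are mutually nonisomorphic, so the multiplicities are invariants of the isomorphism class.

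Combining these two observations: $GL_n(A)$-orbits in $RB(A^{\oplus n})$ correspond bijectively to isomorphism classes of $n$-dimensional RB-modules, which by the semisimple decomposition correspond bijectively to triples $(k,r,l)\in\mathbb{Z}^3_{\geq 0}$ with $k+r+l=n$. The number of such triples is the number of weak compositions of $n$ into three parts,
\[
 \binom{n+2}{2} \;=\; \frac{(n+2)(n+1)}{2}.
\]
Equivalently, one may transport the count across the $GL_n(A)$-equivariant bijection $\Psi$ of Theorem~\ref{theorem p} and read off the orbits on $\rsd(A^{\oplus n})$.

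There is no serious obstacle here; the corollary is essentially a bookkeeping consequence of the classification theorem. The only point requiring care is the assertion that orbits of the conjugation action on $RB(A^{\oplus n})$ coincide with isomorphism classes of RB-structures on the fixed $A$-module $A^{\oplus n}$. This is immediate from the definition of RB-module isomorphism given in Section~2 (an $A$-linear $f\colon V\to W$ with $f\circ p = p'\circ f$), applied to $V=W=A^{\oplus n}$, so every abstract isomorphism class of $n$-dimensional RB-module is realized on $A^{\oplus n}$ and any two realizations are conjugate by an element of $GL_A(A^{\oplus n})=GL_n(A)$.
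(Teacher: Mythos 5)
Your proof is correct and follows exactly the route the paper intends: the corollary is deduced from the semisimplicity statement of Theorem~\ref{thm:classification} via the identification of conjugation orbits in $RB(A^{\oplus n})$ with isomorphism classes of $n$-dimensional RB-modules, followed by the elementary count of weak compositions of $n$ into three parts. The paper leaves this as an unproved consequence of Theorem~\ref{thm:classification}, and your argument supplies precisely the missing bookkeeping in the same spirit.
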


\begin{coro} \label{coro:uniqueness} For any $\mathcal{O}$-submodule $M\subseteq V$, there is a regular-singular decomposition $(M,N)\in \rsd(V)$.  Two $(A, P)$-modules $ (V, p)$ and $ (V', p')$ are isomorphic if and any only there is an $A$-linear isomorphism $\phi: V\rightarrow V'$ such that $ \phi(p(V))=p'(V')$. In particular, $(V,p)$ is uniquely determined by the $\mathcal{O}$-submodule $ p(V)\subseteq V$. 
\end{coro}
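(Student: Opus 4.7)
The plan is to split the statement into three parts---existence of a complementary $N$ for a given $\mathcal{O}$-submodule $M$, the isomorphism criterion, and the uniqueness consequence---and to handle them in that order.

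For the existence assertion, I would apply Corollary~\ref{cor:freed} to write $V=\widetilde{M}\oplus V_0$ and $M=\widetilde{M}\oplus M_f$ with $M_f=M\cap V_0$ a free $\mathcal{O}$-module of finite rank; in particular $M_f$ is a lattice in $V_1:=AM_f\subseteq V_0$. Choose any $A$-linear complement $W$ of $V_1$ inside $V_0$, so that $V=\widetilde{M}\oplus V_1\oplus W$. On $V_1$, an $\mathcal{O}$-basis $v_1,\dots,v_r$ of $M_f$ is simultaneously an $A$-basis, and setting $N_1:=\sum_{i=1}^{r}t^{-1}\bfk[t^{-1}]v_i$ together with the splitting $A=\mathcal{O}\oplus t^{-1}\bfk[t^{-1}]$ applied componentwise yields $V_1=M_f\oplus N_1$ as $\bfk$-spaces, with $N_1$ a $\bfk[t^{-1}]$-submodule by construction. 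Then $N:=N_1\oplus W$ is a $\bfk[t^{-1}]$-submodule of $V$ (since the $A$-subspace $W$ is automatically $\bfk[t^{-1}]$-invariant) and $V=M\oplus N$, so $(M,N)\in\rsd(V)$.

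For the isomorphism criterion, the forward direction is automatic: any RB-isomorphism $\psi:(V,p)\to(V',p')$ is $A$-linear and satisfies $p'\circ\psi=\psi\circ p$, hence $\psi(p(V))=p'(V')$. For the converse I would invoke Theorem~\ref{thm:classification} and Corollary~\ref{coro:classification}: every finite dimensional RB-module is isomorphic to $A_0^{\oplus k}\oplus A_r^{\oplus r}\oplus A_1^{\oplus l}$, and the triple $(k,r,l)$ is a complete invariant. The essential observation is that $(k,r,l)$ is already determined by the pair $(V,p(V))$: under the model decomposition one has $p(V)=0\oplus\mathcal{O}^{r}\oplus A^{l}$ inside $V=A^k\oplus A^r\oplus A^l$, from which a direct computation gives
\begin{equation*}
l=\dim_A\widetilde{p(V)},\qquad r=\dim_A(A\cdot p(V))-l,\qquad k=\dim_A V-r-l.
\end{equation*}
An $A$-linear isomorphism $\phi:V\to V'$ satisfying $\phi(p(V))=p'(V')$ automatically sends $\widetilde{p(V)}$ onto $\widetilde{p'(V')}$ and $A\cdot p(V)$ onto $A\cdot p'(V')$, so $(k,r,l)=(k',r',l')$ and hence $(V,p)\cong(V',p')$.

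The final uniqueness statement is then the special case $V=V'$, $\phi=\mathrm{id}$ of the preceding equivalence. I expect the main obstacle to be the first part: once the structure theorem of Section~3 is in hand, the verification that the basis-dependent $N_1$ really complements $M_f$ inside $V_1$ as $\bfk$-spaces reduces to $A=\mathcal{O}\oplus t^{-1}\bfk[t^{-1}]$ componentwise, but the argument does rely crucially on first isolating the free part $M_f$ via Corollary~\ref{cor:freed} and on the fact that this also handles the case where $AM$ is a proper $A$-subspace of $V$ (via the auxiliary summand $W$).
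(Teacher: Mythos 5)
Your proof is correct and follows the same route as the paper: apply Corollary~\ref{cor:freed} to split off $\widetilde{M}$ for the existence claim, and then invoke the Section~4 classification to read the isomorphism type off the $\mathcal{O}$-submodule $p(V)$. The paper's displayed proof is considerably terser (it merely cites Corollary~\ref{coro:classification} for the existence of $N$ and does not argue the isomorphism criterion at all), so your explicit construction $N=N_1\oplus W$ on the free part plus an $A$-complement, and your observation that $(k,r,l)$ is recovered from $(V,p(V))$ via the $A$-linearly invariant subspaces $\widetilde{p(V)}\subseteq A\cdot p(V)$, are exactly the details the paper leaves implicit.
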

\begin{proof} By Corollary~\ref{cor:freed}, $M=\widetilde{M}\oplus M_f$. By Corollary~\ref{coro:classification}, there exists an $N$ such that $ V=\widetilde{M}\oplus V_{0}\oplus \widetilde{N}$. Hence $ (M, N)$ is an RSD of $V$.
\end{proof}
We remark that the linear map $\phi$ in the corollary is not an $RB$-module morphism. However, there is a $\psi \in GL_n(\calo)$ so that $ \psi\phi$ is an isomorphism. 
\section{Orbits and stabilisers}
\subsection{} In this subsection, let $\calo$ be any integral domain and $ A$ be its field of fractions. For any finite dimensional $A$-vector space $V=A^n$ with fixed $ \calo$-lattice $V_\calo=\calo^n$, we consider the set of all $\calo$-submodules $M$ of $V$. $M$ is called a {\em fractional $\calo$-submodule} if there is $\gamma \in GL_n(A)$ such that $ \gamma (M)\subseteq V_\calo$. Clearly $ GL_n(A)$ acts on the set $\calm_n(\calo)$ of all fractional $\calo$-submodules of $A^n$. We are interested in classifying all $GL_n(A)$-orbits $ \calm_n(\calo)/GL_n(A)$. In case $n=1$, the set  $\calm^*_1(\calo)/GL_1(A)=cl(\calo)$ is exactly the ideal classes of $\calo$, with $ \calm^*_1(\calo)$ being the set of all non-zero fractional ideals, and has a monoid structure and $cl(\calo)$ acts on $ \calm_n(\calo)/GL_n(A)$.  When $ \calo$ is a Dedekind domain, then $cl(\calo)$ is an abelian group acting on the set $ \calm_n(\calo)/GL_n(A)$.
\begin{prop} Assume that  $ \calo $ is a split complete discrete valuation ring.  For any $n\geq 1$ we have:
(a) an $\calo$-submodule $M$ is fractional if and only if it is has no nonzero divisible elements;\\
(b) $|\calm_n(\calo)/GL_n(A)|=n+1$ and the orbit of $M$ is completely determined by $\dim_A(AM)$.
\end{prop}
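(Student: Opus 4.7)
The plan is to bootstrap both parts from the finite-generation machinery of Section~3. For part~(a), in the direction that fractional implies $\widetilde{M}=\{0\}$, I would observe that completeness of $\calo$ together with discreteness of the valuation gives $\bigcap_{m\geq 0}t^m\calo^n=\{0\}$, so the standard lattice $\calo^n$ has no nonzero divisible element; since any $\gamma\in GL_n(A)$ is $A$-linear, divisibility is preserved by $\gamma$, and if $\gamma(M)\subseteq\calo^n$ then $\widetilde{\gamma(M)}=\{0\}$, hence $\widetilde{M}=\{0\}$. Conversely, if $\widetilde{M}=\{0\}$, Theorem~\ref{finite-generation} gives that $M$ is finitely generated, and Corollary~\ref{cor:fg} upgrades this to: $M$ is $\calo$-free of rank $k:=\dim_A AM\leq n$. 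Picking an $\calo$-basis $m_1,\dots,m_k$ of $M$, extending it to an $A$-basis $m_1,\dots,m_k,w_{k+1},\dots,w_n$ of $A^n$, and defining $\gamma^{-1}\in GL_n(A)$ by $v_i\mapsto m_i$ for $i\leq k$ and $v_i\mapsto w_i$ for $i>k$, one obtains $\gamma(M)=\calo v_1\oplus\cdots\oplus\calo v_k\subseteq\calo^n$, so $M$ is fractional.

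For part~(b), the same basis-extension argument in fact produces a canonical representative: every fractional $M$ with $\dim_A AM=k$ lies in the $GL_n(A)$-orbit of the standard submodule $M_k:=\calo v_1\oplus\cdots\oplus\calo v_k\oplus\{0\}^{n-k}\subseteq A^n$, with the degenerate case $k=0$ giving $M=\{0\}$. On the other hand, $\dim_A AM$ is manifestly $GL_n(A)$-invariant, since elements of $GL_n(A)$ are $A$-linear automorphisms of $V$ and therefore send $AM$ to $A\gamma(M)$. Hence distinct values of $k$ give distinct orbits, and as $k$ ranges over $\{0,1,\dots,n\}$ one gets exactly $n+1$ orbits, classified by $\dim_A AM$.

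The main technical input, namely finite generation of $M$ when $\widetilde{M}=\{0\}$, has already been handled in Section~3 and uses completeness of $\calo$ essentially (cf.\ the remark after Lemma~\ref{lem:O-subm}); the remaining work in the present proposition is bookkeeping --- extending an $\calo$-basis to an $A$-basis of $A^n$, producing the required $\gamma\in GL_n(A)$, and noting that $\dim_A AM$ is both $GL_n(A)$-invariant and attained by the standard $M_k$ for every $k\in\{0,1,\dots,n\}$. I do not anticipate a substantive obstacle beyond separating out the trivial orbit $k=0$ and verifying that the basis-extension map does land in $GL_n(A)$.
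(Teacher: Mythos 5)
Your proof is correct and follows essentially the same approach as the paper, which simply cites Theorem~\ref{finite-generation} (for the nontrivial direction of (a)) and Corollary~\ref{cor:fg} (for the rank classification in (b)); you have just spelled out the details the paper leaves implicit, namely that divisibility is a $GL_n(A)$-invariant notion so $\widetilde{\calo^n}=\{0\}$ forces $\widetilde{M}=\{0\}$ for fractional $M$, and that the $\calo$-basis of a rank-$k$ free $M$ extends to an $A$-basis of $A^n$ yielding the standard representative.
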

\begin{proof} (a) follows from Theorem 3.3 and the Remark 3.1.  (b) follows from Corollary \ref{cor:fg}.
\end{proof}
Let $ \calm^r_{n}(\calo)$ be the subset of all fractional $\calo$-submodules of  $\calo$-rank r in $A^n$ and $\calg\calr(n,r)$ the set of all $ r$-dimensional $A$-subspaces, then we have a map $\calm^r_{n}(\calo)\rightarrow \calg\calr(n,r)$ with $M\mapsto AM$. This map is $ GL_n(A)$-equivariant with fiber being the $GL_r(A)/GL_r(\calo)$.
Let $\calg _n(\calo)$ be the set of all $ \calo $-submodules of $ A^n$. 

Again $GL_n(A)$ acts on  $\calg_n(\calo)$. Note that $ \calm_n(\calo)$ is a subset of $\calg_{n} (\calo)$.  
By Corollary~\ref{coro:uniqueness}, the map $ \rsd(V)\rightarrow \calg_{n}(\calo)$ is surjective and it induces  a bijection $\rsd(A^{n})/GL_n(A)\rightarrow \calg_n (\calo)/GL_n(A)$.
\begin{coro} When $ \calo$ is a complete discrete valuation ring, then $\calg _n(\calo)/GL_n(A)$ has exactly $(n+2)(n+1)/2$ many elements and each orbit corresponds to exactly one isomorphism class of $n$-dimensional representations of the Rota-Baxter algebra $A$.
\end{coro}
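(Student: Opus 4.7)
My plan is to assemble three $GL_n(A)$-equivariant bijections already established in the previous sections and read off the count. First, I will invoke Theorem~\ref{theorem p}, which gives a $GL_n(A)$-equivariant bijection $\Psi: RB(A^n) \to \rsd(A^n)$, so passing to orbits yields $RB(A^n)/GL_n(A) \cong \rsd(A^n)/GL_n(A)$. Since isomorphism of RB-module structures on the fixed $A$-vector space $A^n$ is by definition $GL_n(A)$-conjugation in $RB(A^n)$, and since every $n$-dimensional $A$-vector space is $A$-linearly isomorphic to $A^n$, the left-hand side is exactly the set of isomorphism classes of $n$-dimensional $(A,P)$-modules.

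Second, I will use the forgetful map $\Pi: \rsd(A^n) \to \calg_n(\calo)$, $(M,N) \mapsto M$, which is visibly $GL_n(A)$-equivariant. I claim $\Pi$ descends to a bijection on orbit sets: surjectivity on orbits is the existence half of Corollary~\ref{coro:uniqueness} (every $\calo$-submodule $M$ arises as the regular part of some RSD), while injectivity on orbits is the uniqueness half of the same corollary, namely that $(V,p)$ is determined up to $GL_A(V)$-conjugation by $p(V)$. Third, Corollary~\ref{coro:classification} gives $|RB(A^n)/GL_n(A)| = (n+2)(n+1)/2$, with orbits parametrised by triples $(k,r,l) \in \mathbb{Z}^3_{\geq 0}$ satisfying $k+r+l=n$. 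Chaining the three bijections produces the count for $|\calg_n(\calo)/GL_n(A)|$ together with the correspondence between its orbits and isomorphism classes of $n$-dimensional Rota-Baxter representations of $A$.

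The delicate point will be the descent of $\Pi$ in the second step: a priori, two different singular complements $N, N'$ of the same lattice $M$ might yield inequivalent RSDs under $GL_n(A)$, so one has to check that the fibre of the induced map on orbit sets over $GL_n(A)\cdot M$ is a singleton. The input that rules this out is Proposition~\ref{o-decomp} together with Corollary~\ref{o-orbit}, which exhibit the set of admissible $\bfk[t^{-1}]$-complements $N$ of a fixed lattice $M$ as a single $GL_n(\calo)$-orbit; any two such $N$ are therefore already conjugate by an element of the stabiliser of $M$ inside $GL_n(A)$, and the descent is immediate.
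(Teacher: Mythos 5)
Your proof is correct and its structure is compatible with the paper, but the counting step is done differently. The paper, right before the corollary, already records (via Corollary~\ref{coro:uniqueness}) that $\Pi:\rsd(A^n)\to\calg_n(\calo)$ induces a bijection on orbit sets; its \emph{proof} of the corollary then re-derives the count $(n+2)(n+1)/2$ directly, decomposing $M=\widetilde{M}\oplus M_f$ (Corollary~\ref{cor:freed}), letting $k=\dim_A\widetilde M$ run over $0,\dots,n$, and for each $k$ counting the $n-k+1$ possible ranks of $M_f$. You instead transport the already-established count from Corollary~\ref{coro:classification} through the chain $RB(A^n)\xrightarrow{\Psi}\rsd(A^n)\xrightarrow{\Pi}\calg_n(\calo)$. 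Both routes draw on the same classification machinery; yours avoids redoing the count and, as a bonus, makes the bijection with isomorphism classes of $n$-dimensional $(A,P)$-modules fully explicit in one pass, which the paper's proof handles more implicitly.

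One caveat on your final paragraph: Proposition~\ref{o-decomp} and Corollary~\ref{o-orbit} only show that the admissible complements $N$ of the \emph{standard lattice} $\calo^n$ form a single $GL_n(\calo)$-orbit, so they settle the descent only on the regular stratum. For a general $M\in\calg_n(\calo)$ with $\widetilde M\neq 0$ or $AM\subsetneq V$, the argument that the fibre of the induced map on orbit sets is a singleton really does require the full reduction to the free part via Theorem~\ref{rsd-decomp} (equivalently, Theorem~\ref{thm:classification}), which is exactly what underpins the uniqueness assertion of Corollary~\ref{coro:uniqueness}. Since you already cited that corollary for the descent, your proof stands; the last paragraph is a supplementary unwinding that, as written, only treats the lattice case and should not be read as a freestanding justification of injectivity.
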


\begin{proof} Using Corollary 3.5, each $M$ is decomposed into $ M=\widetilde{M} \oplus M_f$. The $GL_n(A)$ orbit of $ M$ is completely determined by  $\dim_A\widetilde{M}=r$ and $M_f \subseteq A^{n-r}$.  For each $r$ there exactly $n-r+1$
$GL_{n-r}(A)$-orbits in $ \calm_{n-r}(\calo)$.
\end{proof}

Let $ \Phi_n: \rsd(A^n)\rightarrow \calg_n(\calo)$ such that $\Phi_n(M,N)=M$.  This map is $ GL_n(A) $-equivariant. Each $GL_n(A)$-orbit in $\calg_n(\calo)$ is determined by a pair of non-negative integers $(\dim_A \widetilde{M}, \dim_A (AM_f))=(k,r)$ with $r+f\leq n$. Suppose $M$ is in the orbit corresponding to $(k,r)$, then stabiliser  group $G_M=\{ g\in GL_n(A)\;|\; gM=M\} $ is  (with $\ell=n-k-r$)
\[ G_M\cong \begin{pmatrix} GL_k(A) & 0& 0\\
M_{r,k}(A)&GL_r(\calo)&0\\
M_{\ell,k}(A)&M_{\ell,r}(A)&GL_\ell(A) \end{pmatrix}.\]

Let $\mathfrak{Fl}^3(V)$ be the set of all filtrations of $A$-subspaces $0=V_0\subseteq V_1\subseteq V_2\subseteq V_3=V$. Let $\mathfrak{Fl}_{d_1,d_2,d_3}(A)$ be the subset of  $\mathfrak{Fl}^3(V)$ of all generalised flags of type $(d_1,d_2,d_3)$, i.e, the  filtrations of $ A$-subspaces $ 0=V_0\subseteq V_1\subseteq V_2\subseteq V_3=V$ with $ \dim_A V_{i}/V_{i-1}=d_i$ for $ i=1,2,3$. 
\[\mathfrak{Fl}^l(V) =\dot{\bigcup}_{d_1+d_2+d_3=n}\mathfrak{Fl}_{d_1,d_2,d_3}(A).\]
$GL_n(A)$ acts on $\mathfrak{Fl}^3(V)$ and each $\mathfrak{Fl}_{d_1,d_2,d_3}(A)$ is a $GL_n(A)$-orbit. 
There is a surjective $GL_n(A)$-equivariant map $\rho: \calg_n(\calo)\rightarrow \mathfrak{Fl}^3(V)$ sending the $ \calo$-submodule $M\subseteq V$ to $ V_1 =\tilde{M}\subseteq V_2=A M\subseteq V_3=V$.  Then $ \rho^{-1}(\mathfrak{Fl}_{d_1,d_2,d_3}(A)) $ is a $GL_n(A)$-orbit in $\calg_n(\calo)$.  This gives a complete description of all $ GL_n(A)$-orbits in $\calg_n(\calo)$.
\begin{coro} Let $ \calo=\bfk[[t]]$. For any $M \in \calg_n(\calo)$, the orbit $ GL_n(A)M\cong GL_n(A)/G_M$, which is a fibre bundle over the generalised flag variety $\mathfrak{Fl}_{k,r,\ell}(A)$ of type $ (k, r, \ell)\in \mathbb{Z}_{\geq 0}^{3} $ over the field $A$ with fibre isomorphic to the affine Grassmanian $GL_r(A)/GL_r(\calo)$. In particular, when $ r=n$, the orbit is an affine Grassmanian  $GL_n(A)/GL_n(\calo)$.
\end{coro}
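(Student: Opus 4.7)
\noindent \emph{Proof proposal.} The plan is to combine Corollary~\ref{cor:freed} on the structure of $\calo$-submodules with a standard orbit--stabiliser argument and the fibration of a three-step partial flag variety by its associated parabolic subgroup.

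First, given $M\in\calg_n(\calo)$, apply Corollary~\ref{cor:freed} to decompose $V=\widetilde{M}\oplus V_0$ with $M=\widetilde{M}\oplus M_f$ and $M_f\subseteq V_0$ a free $\calo$-module of rank $r=\dim_A AM_f$. Set $k=\dim_A\widetilde{M}$ and $\ell=n-k-r$, and pick an $A$-complement of $AM_f$ inside $V_0$; this produces the three-step flag $0\subseteq\widetilde{M}\subseteq AM\subseteq V$ of type $(k,r,\ell)$, which is precisely $\rho(M)\in\mathfrak{Fl}_{k,r,\ell}(A)$. Orbit--stabiliser then gives $GL_n(A)M\cong GL_n(A)/G_M$ with $G_M$ the block subgroup displayed immediately before the corollary.

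Next, $\rho$ is $GL_n(A)$-equivariant, and $GL_n(A)$ acts transitively on $\mathfrak{Fl}_{k,r,\ell}(A)$ with stabiliser of the standard flag equal to the parabolic
\[ P_{k,r,\ell}=\begin{pmatrix} GL_k(A) & 0 & 0 \\ M_{r,k}(A) & GL_r(A) & 0 \\ M_{\ell,k}(A) & M_{\ell,r}(A) & GL_\ell(A) \end{pmatrix}. \]
Hence the induced equivariant map $GL_n(A)/G_M\to GL_n(A)/P_{k,r,\ell}=\mathfrak{Fl}_{k,r,\ell}(A)$ is a fibre bundle with fibre $P_{k,r,\ell}/G_M$. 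Comparing the two block descriptions, the inclusion $G_M\hookrightarrow P_{k,r,\ell}$ differs only in the middle Levi factor, where $GL_r(\calo)$ sits inside $GL_r(A)$, while the unipotent blocks $M_{r,k}(A)$ and $M_{\ell,r}(A)$ and the outer Levi factors $GL_k(A)$ and $GL_\ell(A)$ coincide. Taking cosets therefore yields $P_{k,r,\ell}/G_M\cong GL_r(A)/GL_r(\calo)$, the affine Grassmannian of rank $r$. When $r=n$ (so $k=\ell=0$) the flag variety collapses to a point and the orbit is the affine Grassmannian itself.

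The main obstacle will be pinning down the stabiliser block structure, specifically that every $g\in GL_n(A)$ with $gM=M$ automatically preserves $\widetilde{M}$ and $AM$ and induces an element of $GL_r(\calo)$ on the middle quotient $AM/\widetilde{M}$. The first invariance is immediate because $\widetilde{M}$ is intrinsically the $A$-span of the divisible vectors of $M$; the second follows because $AM=A\widetilde{M}+AM_f$ is an $A$-module invariant of $M$; and the third uses Proposition~\ref{o-decomp} together with Corollary~\ref{o-orbit} to identify the $\calo$-automorphisms of the induced lattice in $AM/\widetilde{M}\cong A^r$ with $GL_r(\calo)$. Once these invariances are established, the fibre bundle structure and the degenerate case $r=n$ follow by elementary group theory.
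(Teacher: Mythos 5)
Your argument is correct and follows the same route the paper takes implicitly: use $\rho$ and the block description of $G_M$ given just before the corollary, note that $G_M$ is the preimage of $GL_r(\calo)$ under the projection of the parabolic $P_{k,r,\ell}$ onto its middle Levi factor, and deduce $P_{k,r,\ell}/G_M\cong GL_r(A)/GL_r(\calo)$. One small slip: for identifying the induced automorphisms of the lattice $M_f\subseteq AM/\widetilde{M}\cong A^r$ with $GL_r(\calo)$, the relevant fact is the remark at the start of Section \ref{sec:classify} (``$\phi(\calo^n)=\calo^n$ implies $\phi\in GL_n(\calo)$''), not Proposition~\ref{o-decomp} or Corollary~\ref{o-orbit}, which concern $\bfk[t^{-1}]$-submodules $N$ rather than $\calo$-lattices.
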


 Note that $G_M$ also acts on the fibre of $\Phi_{n}^{-1}(M)$, we will describe the $G_M$-orbits in $ \Phi_{n}^{-1}(M)$. To do so, we first describe stabiliser $G_{M,N}$.  We recall that the category of $ (A, P)$-modules form an abelian category by \cite{G-Lin}.  Then the stabiliser $ G_{(M,N)}=\{ g \in GL_n(A)\; |\;gM=M, \; gN=N\}$ is isomorphic to the automorphism group the corresponding representation $(A^n, p_M)$ with $ p_M: A^n\rightarrow A^n$ is the projection map onto $M$ with respect to the regular-singular decomposition $ A^n=M\oplus N$ as $ \bfk$-vector space. Thus we compute the automorphism group of each representation.

There are three irreducible representations of the Rota-Baxter algebra $(A, P)$, which are all one dimensional with operator being $0$, $Id$, and $P$ respectively are denoted by
$$
A_{0}:=(A, 0),\quad A_{1}:=(A, Id),\quad A_{r}:=(A, p).
$$
Since $\Hom_{(A, P)}(A, A)$ consists of $A$-linear maps, that is for any $\varphi \in \Hom_{(A, P)}(A, A)$, and any $x \in A$, we have $\varphi (x)=x \varphi (1)$. So $\varphi$ is determined by $\varphi(1)$.

\begin{prop}
\begin{enumerate}
\item \label{it:a}  For any $i, j=0, 1, r$, if $i\neq j$, then $\Hom_{(A, P)}(A_{i}, A_{j})=0$;
\item \label{it:b} $\End_{(A, P)}(A_{0})=\End_{(A, P)}(A_{1})=A$ as $\bfk$-algebra;
\item \label{it:c} $\End_{(A, P)}(A_{r})=\bfk$.
\end{enumerate}
\end{prop}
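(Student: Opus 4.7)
The plan is to reduce each $\Hom$-space to a single functional equation via $A$-linearity. Since $A_0, A_1, A_r$ share the underlying $A$-module $A$, any $(A,P)$-morphism $\varphi: A_i \to A_j$ is $A$-linear and therefore determined by $c = \varphi(1) \in A$ via $\varphi(x) = xc$. The compatibility $\varphi \circ p_i = p_j \circ \varphi$ then collapses to
\begin{equation*}
p_i(x) \cdot c = p_j(xc) \quad \text{for all } x \in A,
\end{equation*}
where $(p_0, p_1, p_r) = (0, \mathrm{Id}, P)$. The entire proposition becomes a case analysis of which $c \in A$ satisfy this equation.

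For part (a), I would enumerate the six ordered pairs with $i \neq j$. Three of them, namely $(0,1)$, $(1,0)$, $(r,0)$, are dispatched immediately by setting $x = 1$, which forces $c = 0$. For $(0,r)$, the condition becomes $P(xc) = 0$ for all $x$, i.e.\ $xc \in A^{-}$ for every $x \in A$; choosing $x = t^{-\nu(c)}$ produces $xc$ with nonzero constant term, contradicting membership in $A^{-}$ unless $c = 0$. The pair $(1,r)$ forces $xc \in \calo$ for all $x$, defeated by $x = t^{-\nu(c)-1}$. The pair $(r,1)$ reduces to $(x - P(x))c = 0$ for all $x$, defeated by $x = t^{-1}$, which gives $t^{-1}c = 0$ and hence $c = 0$.

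For part (b), when $p_i = p_j \in \{0, \mathrm{Id}\}$ the functional equation is automatically satisfied, so $c \in A$ is arbitrary and $\End_{(A,P)}(A_0) = \End_{(A,P)}(A_1) = A$ as $\bfk$-algebras. For part (c), I would solve $P(xc) = P(x) \cdot c$ for all $x \in A$. Testing $x = 1$ yields $c = P(c)$, so $c \in \calo$; then testing $x = t^{-n}$ for each $n \geq 1$ gives $P(t^{-n}c) = 0$, which forces the coefficient of $t^n$ in $c$ to vanish for every $n \geq 1$. Hence $c \in \bfk \subseteq \calo$. Conversely, if $c \in \bfk$, then $\bfk$-linearity of $P$ gives $P(xc) = cP(x)$ for all $x$, so every $c \in \bfk$ is admissible, establishing $\End_{(A,P)}(A_r) = \bfk$.

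The only real obstacle is the bookkeeping for the off-diagonal pairs in (a) and the selection of effective test elements $x$ in each case; no machinery beyond Laurent-series valuations and the explicit description of $P$ as projection onto $\calo$ is required.
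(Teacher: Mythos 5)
Your proof is correct and follows essentially the same route as the paper: reduce each morphism to its value $c=\varphi(1)$ via $A$-linearity and analyze the resulting functional equation $p_i(x)c = p_j(xc)$ case by case. Your version is slightly more explicit in the off-diagonal cases of (a) and in spelling out the converse in (c), but the underlying strategy and all key steps coincide with the paper's.
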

\begin{proof}
 (\ref{it:a}) It is easy to get $\Hom_{(A, P)}(A_{0},
 A_{1})=\Hom_{(A, P)}(A_{1}, A_{0})=\Hom_{(A, P)}(A_{r}, A_{0})=0$. So we only need to prove $\Hom_{(A, P)}(A_{0}, A_{r})=\Hom_{(A, P)}(A_{1}, A_{r})=\Hom_{(A, P)}(A_{r}, A_{1})=0$.

For any $\varphi \in \Hom_{(A, P)}(A_{0}, A_{r})$ satisfies $P\circ \varphi=\varphi \circ 0$. And for any $x \in A_{0}$, we have
$$P( \varphi (x))=\varphi (0 (x))=0.$$
That is $P(x \varphi (1))=0$, then $x \varphi (1) \in \ker P$ for all $ x \in A_0$. Thus $\varphi(1)=0$.

Similarly, we have $\Hom_{(A, P)}(A_{1}, A_{r})=\Hom_{(A, P)}(A_{r}, A_{1})=0$.

\noindent
(\ref{it:b}) For any $\varphi \in \End_{(A, P)}(A_{0})$ (or $\varphi \in \End_{(A, P)}(A_{1})$), since both maps $ 0, 1: A\rightarrow A$ are $A$-linear and $\varphi$ is also $A$-linear, that is $\varphi(1)$ can be any element of $ A$.

\noindent
(\ref{it:c}) For  $\varphi \in \End_{(A, P)}(A_{r})$, we have $\varphi \circ P=P \circ \varphi$. Note that $P$ is not $ A$-linear. For any $x \in A$, we have $\varphi(x)=x\varphi(1)$. So we only need to compute $\varphi(1)$. From $\varphi(1)=\varphi( P(1))=P( \varphi(1))$, we have  $\varphi(1)\in \calo$, that is $\displaystyle \varphi(1)=\sum_{i=0}^{\infty}c_{i}t^{i}$.

For any $n\in \mathbb{N}_{+}$, $0= \varphi ( P(t^{-n}))=P ( \varphi(t^{-n}))=P(t^{-n}\varphi(1))=P(t^{-n}\sum_{i=0}^{\infty}c_{i}t^{i})=\sum_{i=0}^{\infty}c_{n+i}t^i$. We have $c_{i}=0$ for any $i\geq n>0$. So $\varphi(1)=c\in \bfk$.
\end{proof}

For any $(A, P)$-module $ (A_{0}^{\oplus k}\oplus A_{1}^{\oplus \ell}\oplus A_{r}^{\oplus r})$ we have
\begin{eqnarray*}
\End_{(A, P)}(A_{0}^{\oplus k}\oplus A_{1}^{\oplus \ell}\oplus A_{r}^{\oplus r})&=&\End_{(A, P)}(A_{0}^{\oplus k})\oplus \End_{(A, P)}(A_{1}^{\oplus \ell})\oplus \End_{(A, P)}(A_{r}^{\oplus r})\\
&=&M_{k\times k}(A) \times  M_{\ell\times \ell}(A) \times M_{r \times r}(\bfk).
\end{eqnarray*}

From the Theorem \ref{theorem p}, we have for any $(M, N) \in \rsd(A^{\oplus n})$, there exists an Rota-Baxter presentation operator $p \in RB(A^{\oplus n})$. $GL(A^{\oplus n})$ acts on the set $\rsd(A^{\oplus n})$, now we  determine the stabilizer of $(M, N)$.
\begin{theorem} \label{stabiliser} Let $(M, N)\in \rsd(V)$ be corresponding to $ A_0^{\oplus k}\oplus A_1^{\oplus \ell}\oplus A_r^{\oplus r}$. The
\begin{eqnarray*}
\stab_{GL({A^{\oplus n}})}(M, N)&=&GL_{k}(A)\times GL_{\ell}(A)\times GL_{r}(\bfk).\\
\calo_{k, \ell, r}&=&GL_{n}(A)/(GL_{k}(A)\times GL_{\ell}(A)\times GL_{r}(\bfk)).
\end{eqnarray*}
\end{theorem}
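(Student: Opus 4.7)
The plan is to identify the stabiliser of $(M,N)$ with the automorphism group of the associated $(A,P)$-module, and then read off both assertions from the endomorphism computation performed immediately before the theorem.

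First I would observe that an element $g \in GL_A(V)$ satisfies $g(M)=M$ and $g(N)=N$ if and only if $g$ commutes with the projection $p_M \colon V \to V$ onto $M$ along $N$. One direction is immediate: if $g$ preserves the decomposition $V = M \oplus N$, then for $x = x_M + x_N$ we get $g(p_M(x)) = g(x_M) = p_M(g(x))$. Conversely, if $g \circ p_M = p_M \circ g$, then $g(M) = g(p_M(V)) = p_M(g(V)) = M$, and $g(N) = g(\ker p_M) = \ker p_M = N$. This identifies $\stab_{GL_A(V)}(M,N)$ with $\Aut_{(A,P)}(V, p_M)$ as groups.

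Next, by Theorem~\ref{thm:classification} the $(A,P)$-module $(V, p_M)$ is isomorphic to $A_0^{\oplus k} \oplus A_1^{\oplus \ell} \oplus A_r^{\oplus r}$, and the preceding proposition gives
\[
\End_{(A,P)}(A_0^{\oplus k} \oplus A_1^{\oplus \ell} \oplus A_r^{\oplus r}) \;\cong\; M_{k\times k}(A) \times M_{\ell\times \ell}(A) \times M_{r\times r}(\bfk).
\]
Taking units factor-wise immediately produces $\Aut_{(A,P)}(V, p_M) \cong GL_k(A) \times GL_\ell(A) \times GL_r(\bfk)$, which establishes the stabiliser formula.

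For the orbit identification, Theorem~\ref{thm:classification} (together with Theorem~\ref{theorem p}) implies that any two RSDs whose associated $(A,P)$-modules share the same decomposition type $(k, \ell, r)$ are isomorphic, and any $A$-linear isomorphism realizing such an isomorphism is an element of $GL_n(A)$ carrying one RSD to the other. Hence $\calo_{k,\ell,r}$ is a single $GL_n(A)$-orbit, and the orbit-stabiliser correspondence gives $\calo_{k,\ell,r} \cong GL_n(A)/(GL_k(A) \times GL_\ell(A) \times GL_r(\bfk))$. The only subtle point is the first paragraph's verification that the stabiliser of the pair $(M,N)$ coincides with the automorphism group of $(V, p_M)$ rather than merely containing it; this is the main (small) obstacle, and it is handled by the elementary commutation argument above.
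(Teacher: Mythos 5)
Your proof is correct and follows essentially the same route as the paper: identify the stabiliser of $(M,N)$ with $\Aut_{(A,P)}(V,p_M)$, then read the answer off the endomorphism computation $\End_{(A,P)}(A_0^{\oplus k}\oplus A_1^{\oplus\ell}\oplus A_r^{\oplus r})\cong M_k(A)\times M_\ell(A)\times M_r(\bfk)$ by taking units, and finish with orbit-stabiliser. The only difference is that the paper merely asserts the identification $\stab_{GL_n(A)}(M,N)\cong\Aut_{(A,P)}(A^n,p_M)$, whereas you supply the short commutation argument ($g$ preserves both summands iff $g\circ p_M=p_M\circ g$) that justifies it.
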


We now return the determining the fibre $ \Phi_{n}^{-1}(M)$ for an $ M\in \calg_n(\calo)$. We recall that $ M=\widetilde{M}\oplus M_f$ is of the type $ (k, r) $ with $ k=\dim_A \widetilde{M}$ and $ r=\dim_A AM=\rank_\calo M_f$. Then $M$ defines an element $ 0\subseteq \widetilde{M} \subseteq AM \subseteq V$  in $ \mathfrak{Fl}_{k,r,\ell}(A)$.
The each element $N$ in the fibre $ \Phi_{n}^{-1}(M)$ is uniquely determined by a pair $ (N_f, \widetilde{N})$. Here $\widetilde{N}$ is an $A$-subspace of $ V$ complement to the $A$-subspace $ AM$, and $ N_f \in \Phi^{-1}_{A^{\oplus r}}(M_f)$. We know the set of the all such $ \widetilde{N}$ form an $A$-variety which is isomorphic to unipotent radical
\[ \begin{pmatrix} I_{k+r} & 0\\
M_{\ell, k+r}(A)& I_\ell
\end{pmatrix}
\]
of the parabolic subgroup $\stab_{GL_n(A)}(AM)$.
On the other hand, by Corollary~\ref{o-orbit},
the fibre of $ \Phi_{A^{r}}^{-1}(\calo^r)$ is a single $ GL_r(\calo)$-orbit.
By Theorem~\ref{stabiliser}, we have
$\stab_{GL_r(\calo)}(N)=\stab_{GL_n(A)}(M_f, N_f)=GL_n(\bfk)$. Hence we have
\[  \Phi_{A^{r}}^{-1}(\calo^r)\cong GL_r(\calo)/GL_r(\bfk).\]

\begin{theorem} The $GL_n(A)$-equivariant map $\Phi_n: \rsd(A^n)\rightarrow \calg_n(\calo)$ is onto. For $M\in \calg_n(\calo)$ of type $ (k, r)$, then fibre of $ \Phi_{n}^{-1}(M)$ is isomorphic $G_M/\aut(M, N)$ which is
\[ \begin{pmatrix} GL_k(A) & 0& 0\\
M_{r,k}(A)&GL_r(\calo)&0\\
M_{\ell,k}(A)&M_{\ell,r}(A)&GL_\ell(A) \end{pmatrix}/\begin{pmatrix} GL_k(A) & 0& 0\\
0&GL_r(\bfk)&0\\
0&0&GL_\ell(A) \end{pmatrix}.\]
\end{theorem}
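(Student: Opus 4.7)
The plan is to assemble three ingredients via the orbit--stabiliser theorem: surjectivity of $\Phi_n$, transitivity of the $G_M$-action on each fibre, and the stabiliser computation already performed in Theorem~\ref{stabiliser}. Surjectivity is immediate from Corollary~\ref{coro:uniqueness}, which produces for each $M \in \calg_n(\calo)$ some $N$ with $(M,N) \in \rsd(A^n)$. The stabiliser of a point $(M,N) \in \Phi_n^{-1}(M)$ inside $G_M$ is precisely the group of $A$-linear automorphisms of $V$ preserving both $M$ and $N$, equivalently the RB-automorphism group of the module $(A^n, p_{M,N})$; by Theorem~\ref{stabiliser} this coincides with the block-diagonal subgroup $GL_k(A) \times GL_r(\bfk) \times GL_\ell(A)$ displayed as the second matrix in the statement.

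The heart of the argument is the transitivity of $G_M$ on $\Phi_n^{-1}(M)$. Given two RSDs $(M, N_1), (M, N_2) \in \Phi_n^{-1}(M)$, I observe that the corresponding RB-modules $(V, p_{M,N_1})$ and $(V, p_{M,N_2})$ share the same isomorphism type: by Theorem~\ref{rsd-decomp} the multiplicities of the irreducibles $A_1$, $A_r$, $A_0$ in the decomposition are respectively $\dim_A \widetilde{M}$, $\rank_\calo M_f$, and $n - \dim_A AM$, all of which depend only on $M$. By the classification Theorem~\ref{thm:classification}, there is an RB-isomorphism $\phi \colon (V, p_{M,N_1}) \to (V, p_{M,N_2})$, that is, an element $\phi \in GL_A(V)$ satisfying $\phi \circ p_{M,N_1} = p_{M,N_2} \circ \phi$. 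Taking images of this identity on $V$ gives $\phi(M) = M$, so $\phi \in G_M$, and taking kernels gives $\phi(N_1) = N_2$.

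Combining these, orbit--stabiliser yields $\Phi_n^{-1}(M) \cong G_M/\aut(M,N)$, and substituting the already-established parabolic description of $G_M$ together with the block-diagonal form of $\aut(M,N)$ produces exactly the displayed quotient of matrix groups. The step I expect to be the main obstacle is the transitivity: upgrading an abstract RB-module isomorphism to an element of $G_M$ rather than merely $GL_n(A)$ is the crux, but it works cleanly because the intertwining relation $\phi p_1 = p_2 \phi$ automatically forces $\phi$ to carry images of the projections to images and kernels to kernels, so no further adjustment by a parabolic element is required.
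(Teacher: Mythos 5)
Your proof is correct, and it takes a genuinely cleaner conceptual route than the paper's own argument. The paper establishes the fibre description by hand: after fixing $M$, it parametrizes an arbitrary $N$ in the fibre by the pair $(N_f,\widetilde{N})$ from Theorem~\ref{rsd-decomp}, identifies the choices of $\widetilde{N}$ with the unipotent radical of the parabolic $\stab_{GL_n(A)}(AM)$, and identifies the choices of $N_f$ with the $GL_r(\calo)$-orbit $GL_r(\calo)/GL_r(\bfk)$ via Corollary~\ref{o-orbit}, then assembles these pieces to recognize the quotient of block groups. You instead prove transitivity of $G_M$ on $\Phi_n^{-1}(M)$ abstractly and invoke orbit--stabiliser. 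Your transitivity argument is the right one and is correctly executed: since $\widetilde{M}$ and $AM$ are intrinsic to $M$, the multiplicities $\bigl(\dim_A\widetilde{M},\ \rank_\calo M_f,\ n-\dim_A AM\bigr)$ of $A_1$, $A_r$, $A_0$ are determined by $M$ alone, so by semisimplicity (Theorem~\ref{thm:classification}) any two points of the fibre give isomorphic $RB$-modules, and the intertwining identity $\phi p_1 = p_2\phi$ forces $\phi(\im p_1)=\im p_2=M$ and $\phi(\ker p_1)=\ker p_2$, so the isomorphism already lies in $G_M$ — exactly the subtle point you flagged, and it does work without further adjustment. What your approach buys is economy: it reuses the classification theorem and the stabiliser computation (Theorem~\ref{stabiliser}) directly, avoiding the somewhat informal bookkeeping in the paper's assembly of the fibre from its two factors, which does not obviously account for the $M_{r,k}(A)$ block of $G_M$. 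What the paper's approach buys in return is an explicit coordinate description of the fibre as a bundle over the unipotent radical, which it wants for the geometric remarks surrounding the theorem. One small caveat: in citing Corollary~\ref{coro:uniqueness} for surjectivity you are implicitly relying on Corollary~\ref{cor:freed} and Theorem~\ref{rsd-decomp} (as the paper's own proof of Corollary~\ref{coro:uniqueness} does), but that is a fair use of the stated result rather than a gap.
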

\begin{remark}
Let $K=GL_n(\bfk[t^-1])$ be the subgroup of $G=GL_n(A)$ and $ GL_n(\bfk[t^{-1}]) $ is the stabilizer of the $k[t^{-1}]$-submodule $ N$ for an RSD $(\calo^n, N)$ in $V$. Set $H=GL_n(\calo)$. The space 
$G/H\times G/K$ is  the set of the  pairs $(M, N)$  with $ M$ being an $\calo$-lattice in $V$ and $ N$ be a $\bfk[t^{-1}]$-lattice. The double cosets $K\ G/H$ is in bijection to the $G$-orbits in $G/H\times G/K $. This spaces are important subject of study  in Lie representation theory. $RSD(V)$ is the $G$-orbit of $(K, H)$ in $G/K\times G/H$. Or it corresponds to the double coset $KH$ in $G$. It would be interesting to describe all double cosets $G$ as a certain space of representations of the Rota-Baxter algebra $(A, P)$ of dimension $n$. This will be discussed in a future paper. 
\end{remark}

\end{document}